\newtheorem{thmu}{Theorem}
\newtheorem{lem}{Lemma}[section]
\newtheorem{prop}[lem]{Proposition}
\newtheorem{cor}[lem]{Corollary}
\newtheorem{thm}[lem]{Theorem}
\theoremstyle{definition}
\newtheorem{defin}[lem]{Definition}
\newtheorem*{acknowledgements}{Acknowledgements}
\newtheorem{remark}[lem]{Remark}
\newtheorem{example}[lem]{Example}
\newtheorem*{motivating question}{Motivating question}
\newtheorem*{setup}{Setup}
\DeclareMathOperator{\rad}{rad}
\DeclareMathOperator{\Tr}{Tr}
\DeclareMathOperator{\Thick}{Thick}
\DeclareMathOperator{\bbL}{\mathbb{L}}
\DeclareMathOperator{\gldim}{gldim}
\DeclareMathOperator{\Ext}{Ext}
\DeclareMathOperator{\gr}{gr}
\DeclareMathOperator{\stgrmodu}{\setul{0.6ex}{0.1ex}
\textup{\ul{gr}}}
\DeclareMathOperator{\Hom}{Hom}
\DeclareMathOperator{\RHom}{\mathbb{R}Hom}
\DeclareMathOperator{\REnd}{\mathbb{R}End}
\DeclareMathOperator{\End}{End}
\DeclareMathOperator{\modu}{mod}
\DeclareMathOperator{\Hm}{H}
\DeclareMathOperator{\HH}{HH}
\DeclareMathOperator{\op}{op}
\DeclareMathOperator{\perf}{perf}
\DeclareMathOperator{\De}{\mathcal{D}}
\DeclareMathOperator{\Z}{Z}
\title{Higher Koszul algebras and the \textbf{(Fg)}-condition}
\author{Johanne Haugland and Mads Hustad Sand\o y}
\begin{document}

\keywords{Higher Koszul algebra, the \textbf{(Fg)}-condition, support varieties, higher homological algebra, trivial extension, preprojective algebra, $n$-representation infinite algebra, $n$-representation tame algebra, dimer algebra, dimer model, $A_\infty$-algebra}
\subjclass[2020]{16E40, 16S37, 16W50, 16G20, 16G60, 18G80}

\address{Department of mathematical sciences, NTNU, NO-7491 Trondheim, Norway}
\email{johanne.haugland@ntnu.no}
\email{madshs@gmail.com}

\begin{abstract}
Determining when a finite dimensional algebra satisfies the finiteness property known as the (\textbf{Fg})-condition is of fundamental importance in the celebrated and influential theory of support varieties. We give an answer to this question for higher Koszul algebras, generalizing a result by Erdmann and Solberg. This allows us to establish a strong connection between the (\textbf{Fg})-condition and higher homological algebra, which significantly extends the classes of algebras for which it is known whether the (\textbf{Fg})-condition is satisfied. In particular, we show that the condition holds for an important class of algebras arising from consistent dimer models.

%it is known whether the (\textbf{Fg})-condition is satisfied. In particular, our approach classifies when the condition holds for an important class of algebras arising from consistent dimer models.
\end{abstract}

\maketitle

\tableofcontents

\section{Introduction}

The influential theory of support varieties for modules over group algebras of finite groups was introduced in \cite{Carlson1,Carlson2}, using the maximal ideal spectrum of the group cohomology ring. Analogue fruitful theories have later been established in different areas, e.g.\ for restricted Lie algebras \cite{Friedlander-Parshall}, cocommutative Hopf algebras \cite{Friedlander-Suslin'97} and complete intersections \cite{Avramov-Buchweitz}.

The general investigation of support varieties for arbitrary finite dimensional algebras was initiated in \cite{SS04}. These varieties are defined in terms of the action of the Hochschild cohomology ring on the $\Ext$-algebra of modules. Given that the crucial finiteness property known as the (\textbf{Fg})\emph{-condition} (see \cref{subsec: fg-condition}) is satisfied, these support varieties have been shown to encode important homological behaviour, similarly as in the classical setting of modular representations of finite groups. In particular, this includes being able to show that an algebra is wild if the complexity of the projective resolution of its simple modules is greater than two \cite{BS10b}. Moreover, if the algebra is also assumed to be self-injective, one has that modules in the same component of the Auslander--Reiten quiver have the same variety \cite{SS04}, and one obtains a generalization of Webb's theorem in the form of \cite[\mbox{Theorem 5.6}]{EHTSS}, meaning essentially that one can determine a nice list of possible tree classes of the components of the stable Auslander--Reiten quiver of the algebra.

Determining whether the (\textbf{Fg})-condition holds for a given class of finite dimensional algebras is hence fundamentally important for the study of support varieties. This leads to the following motivating question.

\begin{motivating question}
When does a finite dimensional algebra satisfy the (\textbf{Fg})-condition?
\end{motivating question}

The question above has attracted significant attention. In particular, the (\textbf{Fg})-condition has been shown to be invariant under several forms of equivalences including derived equivalence \cite{Kulshammer-Psaroudakis-Skartsaeterhagen}, separable equivalence \cite{B23}, and stable equivalence of Morita type with levels \cite{Skartsaeterhagen'16}. In addition, it is known that various ways of constructing new algebras from old ones preserve the (\textbf{Fg})-condition. Namely, forming skew group algebras and coverings preserve the condition up to some assumptions on the characteristic \cite{B23,Sandoey24}, and tensor products of algebras that satisfy the condition must themselves satisfy the condition \cite{Bergh-Oppermann'08}. 

Although it is known that not every finite dimensional algebra satisfies the (\textbf{Fg})-condition, the property has been shown to hold for several classes of algebras. On the one hand, it is known to hold for group algebras \cite{Evens'61, Venkov'59}, universal enveloping algebras of restricted Lie algebras, and more generally for finite dimensional cocommutative (graded) Hopf algebras \cite{Drupieski'16,Friedlander-Suslin'97}. Note that the proofs in all these cases leverage the powerful assumption of working with a cocommutative (graded) Hopf algebra. 

On the other hand, when one is not necessarily dealing with a cocommutative (graded) Hopf algebra, much less is known. Nevertheless, the (\textbf{Fg})-condition has been investigated in specific cases like for self-injective algebras of finite representation type \cite{Green-et-al-2}, for monomial algebras \cite{Dotsenko-Gelinas-Tamaroff'23,Jawad-Snashall-Taillefer'24}, for quantum complete intersection algebras \cite{Bergh-Oppermann'08}, for Koszul duals of certain classes of Artin--Schelter regular \mbox{algebras \cite{Itaba'19}}, and for self-injective radical cube zero algebras \cite{Erdmann-Solberg-2,Erdmann-Solberg-1,Said'15, Sandoey24}. In each of these cases, subclasses for which the (\textbf{Fg})-condition holds have been specifically identified. 

An important result by Erdmann and Solberg gives a characterization of when a finite dimensional Koszul algebra satisfies the (\textbf{Fg})-condition in terms of a criterion on the associated Koszul dual. More precisely, the (\textbf{Fg})-condition holds for such an algebra if and only if the Koszul dual is finitely generated over its graded center which is also noetherian \cite[Theorem 1.3]{Erdmann-Solberg-1}. It should be noted that this result is what allows for the classification of weakly symmetric algebras with radical cube zero satisfying the (\textbf{Fg})-condition obtained in \cite{Erdmann-Solberg-1}. Moreover, the result has later been applied to extend the classification to all self-injective algebras with radical cube zero \cite{Said'15,Sandoey24}. 

In this paper we investigate the motivating question from the viewpoint of higher homological algebra. The foundation for this approach is provided in \cite{HS}, where the authors introduce \emph{$n$-$T$-Koszul algebras} (see \cref{subsec: background n-T}) as a higher-dimensional analogue of classical Koszul algebras. This generalizes the notion of $T$-Koszul algebras from \cite{Green-Reiten-Solberg,Madsen}, where Koszulity is formulated with respect to a tilting module $T$, but the rigidity condition now additionally depends on a positive \mbox{integer $n$}.

In the first main result of this paper, given as \cref{thm: intro1} below, we prove that the characterization of classical finite dimensional Koszul algebras satisfying the (\textbf{Fg})-condition from \cite[Theorem 1.3]{Erdmann-Solberg-1} extends to the significantly bigger class of \mbox{$n$-$T$-Koszul} algebras. This provides a full answer to the motivating question for the class of higher Koszul algebras. For the definition of the $n$-$T$-Koszul dual $\Lambda^!$ of an $n$-$T$-Koszul algebra $\Lambda$, see \cref{def: n-T-Koszul dual}.

\begin{thmu}[see \cref{thm: characterisation for n-T}]\label{thm: intro1}
Let $\Lambda$ be a finite dimensional $n$-$T$-Koszul algebra. Then $\Lambda$ satisfies the \emph{(\textbf{Fg})}-condition if and only if the graded center $Z_{\gr}(\Lambda^!)$ is noetherian and $\Lambda^!$ is module finite over $Z_{\gr}(\Lambda^!)$.
\end{thmu}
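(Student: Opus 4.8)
The plan is to reduce the higher Koszul situation to the classical one by passing to a suitable (graded) algebra to which the Erdmann--Solberg characterization \cite[Theorem 1.3]{Erdmann-Solberg-1} already applies, and then to argue that both the (\textbf{Fg})-condition and the noetherianity/module-finiteness conditions on the graded center are preserved under this passage. Concretely, the notion of $n$-$T$-Koszul algebra is defined with respect to a tilting module $T$, and the natural candidate for the intermediary object is the (derived) endomorphism algebra $\Gamma = \REnd_\Lambda(T)$ together with its $\Ext$-algebra $E = \bigoplus_{i} \Ext^i_\Gamma(\Gamma/\rad\Gamma, \Gamma/\rad\Gamma)$, which should be identifiable with the $n$-$T$-Koszul dual $\Lambda^!$ up to the relevant regrading. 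The first step is therefore to make precise the relationship between $\Lambda$, the tilting module $T$, and $\Lambda^!$ at the level of derived categories, using the definitions in \cref{subsec: background n-T} and \cref{def: n-T-Koszul dual}.

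The second step is to establish that the (\textbf{Fg})-condition is transported correctly along this correspondence. Here I would invoke the invariance results recorded in the introduction: the (\textbf{Fg})-condition is preserved under derived equivalence \cite{Kulshammer-Psaroudakis-Skartsaeterhagen}. If the $n$-$T$-Koszul structure furnishes a derived equivalence (or more generally a singular/stable equivalence of the type known to preserve (\textbf{Fg})) between $\Lambda$ and a classical Koszul algebra, then $\Lambda$ satisfies (\textbf{Fg}) if and only if that Koszul algebra does. The key computation is to check that the Hochschild cohomology ring acting on the $\Ext$-algebra of $\Lambda$-modules has the required finite generation precisely when the corresponding classical data do; this is where the $A_\infty$-structure on $\Lambda^!$ (signalled by the keywords) is likely to enter, since the higher multiplications must be controlled to identify the graded center correctly.

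The third step is the identification of the graded center. I would show that $Z_{\gr}(\Lambda^!)$ as defined for the higher Koszul dual agrees, after the appropriate grading shift induced by $n$, with the graded center appearing in the classical Erdmann--Solberg statement, and that $\Lambda^!$ is module finite over it exactly when the classical Koszul dual is module finite over its graded center. Combining the transported (\textbf{Fg})-condition with this identification, the classical theorem \cite[Theorem 1.3]{Erdmann-Solberg-1} then yields the desired equivalence for $\Lambda$.

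The main obstacle I anticipate is the bookkeeping forced by the parameter $n$: the rigidity condition in the $n$-$T$-Koszul setting spreads the relevant $\Ext$-groups across degrees that are multiples of $n$ rather than consecutive integers, so neither the identification $\Lambda^! \simeq E$ nor the comparison of graded centers is literally an instance of the classical case. The delicate point will be to show that this regrading does not destroy noetherianity or module-finiteness and that the $A_\infty$-higher products vanish in the degrees that would otherwise obstruct the identification of $Z_{\gr}(\Lambda^!)$ with a genuine graded center of an associative algebra. Verifying this compatibility, rather than the invocation of the derived-invariance and classical characterization results, is where the real work lies.
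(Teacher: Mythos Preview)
Your plan has a genuine gap: the proposed reduction to the classical Erdmann--Solberg theorem via derived equivalence does not go through, because there is in general no classical Koszul algebra derived equivalent to $\Lambda$. Indeed, the paper's motivating examples (such as \cref{ex: case not known}) are $n$-$T$-Koszul algebras with non-quadratic relations, hence not classically Koszul, and there is no candidate Koszul algebra in their derived equivalence class. Your sentence ``If the $n$-$T$-Koszul structure furnishes a derived equivalence \ldots\ between $\Lambda$ and a classical Koszul algebra'' is precisely the step that fails. Relatedly, your description of the intermediary object is off: $\Lambda^!$ is not an $\Ext$-algebra of simples over $\Gamma = \REnd_\Lambda(T)$ (a dg-algebra, for which $\Gamma/\rad\Gamma$ has no obvious meaning), but rather $\Hm^*(\Gamma) \simeq \Ext^*_\Lambda(T,T)$ itself.

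The paper does not reduce to \cite[Theorem 1.3]{Erdmann-Solberg-1}; it \emph{reproves} its mechanism in greater generality using the $A_\infty$-center of Briggs--G\'elinas. The key steps are: (i) show $\Thick(T) = \De^b(\Lambda)$, so that $pT$ is a homologically balanced $\Gamma$-$\Lambda$-bimodule; (ii) invoke \cite[Corollary 3.9]{Briggs-Gelinas} to conclude that the characteristic morphism $\HH^*(\Lambda) \to \Ext^*_\Lambda(T,T)$ surjects onto the $A_\infty$-center of $\Ext^*_\Lambda(T,T)$; (iii) use the internal grading coming from the $n$-$T$-Koszul condition to prove $\Gamma$ is \emph{formal}, so that the $A_\infty$-center coincides with the ordinary graded center $Z_{\gr}(\Lambda^!)$; (iv) conclude via \cref{prop: equivalent fg}. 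You correctly anticipated that controlling the higher $A_\infty$-products is where the work lies, but the payoff is formality and the identification of the $A_\infty$-center with $Z_{\gr}(\Lambda^!)$, not a derived equivalence to a classical Koszul algebra.
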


The key idea in the proof of \cref{thm: intro1} is to employ work by Briggs and G{\'e}linas in the setup of $A_\infty$-algebras \cite{Briggs-Gelinas}. In order to get access to this theory, we demonstrate in \cref{thm: A_infty-structure on Koszul dual} that the dual of an $n$-$T$-Koszul algebra is indeed the cohomology of a formal $A_\infty$-algebra.

One important consequence of \cref{thm: intro1} is that it enables us to take advantage of the connections between higher Koszul algebras and higher homological algebra that are established in \cite{HS}. The class of $n$-hereditary algebras is introduced in \cite{HIO14,Iya11, Iyama-Oppermann} as a higher analogue of classical hereditary algebras from the viewpoint of higher Auslander--Reiten theory. These algebras have received significant attention \cite{DI20,Dramburg-Gasanova'24b,HJS,HI11b, HI11,HKV25,ST,Vas19,Vas23} and have been shown to relate to many different areas of mathematics \cite{Amiot-Iyama-Reiten,DJL21,DJW19,EP12,HIMO14,HM24,IW14,JKM24,Nakajima,OT12}. The class of $n$-hereditary algebras splits up into \emph{$n$-representation finite} and \emph{$n$-representation infinite} algebras, coinciding with the classical notions of representation finite and representation infinite hereditary algebras in the case $n=1$. We note that while $n$-representation infinite algebras play the most important role in this paper, there are also connections between the (\textbf{Fg})-condition and the theory of $n$-representation finite algebras as outlined in \cref{rem: n-RF case}.

Our second main result, given as \cref{thm: intro2} below, highlights the significance of $n$-representation infinite algebras in the theory of support varieties. Classically, one can determine whether a hereditary algebra is tame by checking if its preprojective algebra is a noetherian algebra over its center. \cref{thm: intro2} is obtained by combining a higher version of this with \cref{thm: intro1} and a characterization result for graded symmetric higher Koszul algebras from \cite{HS}. 

\begin{thmu}[see \cref{cor: fg}]\label{thm: intro2}
Let $\Lambda$ be a graded symmetric finite dimensional algebra of highest degree $1$ with $\Lambda_0$ an $n$-representation infinite algebra. Then $\Lambda$ satisfies the \emph{(\textbf{Fg})}-condition if and only if $\Lambda_0$ is $n$-representation tame.
\end{thmu}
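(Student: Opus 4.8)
The plan is to bring \cref{thm: intro1} to bear by recognizing $\Lambda$ as a higher Koszul algebra whose dual is the relevant higher preprojective algebra, and then to match the resulting finiteness condition on the graded center against the very definition of $n$-representation tameness. First I would use the characterization of graded symmetric $n$-$T$-Koszul algebras from \cite{HS}. A graded symmetric finite dimensional algebra $\Lambda$ of highest degree $1$ with $\Lambda_0$ an $n$-representation infinite algebra is necessarily of the form $\Lambda \cong \Lambda_0 \ltimes D\Lambda_0$, the graded trivial extension with the dual bimodule placed in degree $1$, and this is shown in \cite{HS} to be $n$-$T$-Koszul with $n$-$T$-Koszul dual $\Lambda^!$ isomorphic to the $(n+1)$-preprojective algebra $\Pi = \Pi_{n+1}(\Lambda_0)$. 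This is the higher-dimensional analogue of the classical identification of the Koszul dual of the trivial extension of a hereditary algebra with its preprojective algebra. Applying \cref{thm: intro1} to $\Lambda$ then shows that $\Lambda$ satisfies the (\textbf{Fg})-condition if and only if $Z_{\gr}(\Pi)$ is noetherian and $\Pi$ is module finite over $Z_{\gr}(\Pi)$.

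On the other side, $\Lambda_0$ is by definition $n$-representation tame precisely when $\Pi$ is a noetherian algebra that is module finite over its \emph{ordinary} center $Z(\Pi)$. Since module finiteness over a noetherian central subring forces noetherianity of the overring, and conversely a central subring over which a noetherian ring is module finite is itself noetherian by Eakin--Nagata, this condition is equivalent to requiring that $Z(\Pi)$ be noetherian and $\Pi$ be module finite over $Z(\Pi)$. The theorem thus reduces to the assertion that the analogous finiteness statements for the \emph{graded} center $Z_{\gr}(\Pi)$ and for the ordinary center $Z(\Pi)$ coincide.

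This last comparison is the main obstacle I anticipate, and I would resolve it by passing to even degrees. On homogeneous components of even degree the graded commutator carries no sign, so the even parts of the graded and ordinary centers agree, $Z_{\gr}(\Pi)^{\mathrm{ev}} = Z(\Pi)^{\mathrm{ev}}$. Each odd-degree graded-central element squares into the even part, and likewise for the ordinary center, so that both $Z_{\gr}(\Pi)$ and $Z(\Pi)$ are module finite over their common even subring. Consequently noetherianity of $Z_{\gr}(\Pi)$ together with module finiteness of $\Pi$ over it is equivalent to the same statements for $Z(\Pi)$, both being equivalent to the corresponding statements for the common even subring $Z_{\gr}(\Pi)^{\mathrm{ev}} = Z(\Pi)^{\mathrm{ev}}$. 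Combining this equivalence with the two characterizations above yields that $\Lambda$ satisfies the (\textbf{Fg})-condition if and only if $\Lambda_0$ is $n$-representation tame, as desired.
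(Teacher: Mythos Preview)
Your overall strategy is the paper's: identify $\Lambda$ as an $(n+1)$-$T$-Koszul algebra with $T=\Lambda_0$ via \cite{HS}, identify $\Lambda^!$ with $\Pi_{n+1}(\Lambda_0)$, apply \cref{thm: intro1}, and then pass between the graded center and the ordinary center of $\Pi$ through their common even part (the $2$-Veronese). The paper carries this out via \cref{thm: Fg iff n-rep tame}, with the center comparison isolated as \cref{prop: mf over gr comm subalgebra iff mf over l-Veronese of that subalgebra}.

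There is, however, a genuine gap in your justification of the center comparison. You assert that because each odd-degree (graded-)central element squares into the even part, $Z_{\gr}(\Pi)$ and $Z(\Pi)$ are module finite over their common even subring. This implication fails in general: integrality of individual elements over a subring does not give module finiteness without a finiteness hypothesis. For a concrete obstruction, take $k[x_1,x_2,\dots]/(x_i^2)$ with all $x_i$ in degree~$1$; every odd-degree element squares to zero, yet the odd part is infinite dimensional over the degree-zero piece~$k$ and the ring is not module finite over its even subring. The correct argument, which the paper gives as \cref{prop: mf over gr comm subalgebra iff mf over l-Veronese of that subalgebra}, \emph{uses the assumed noetherianity} of $Z$ to extract finitely many homogeneous generators of $Z_{>0}$ and then shows by induction on degree that every odd-degree element lies in the $Z_{2*}$-span of the finitely many odd generators. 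So the equivalence you want does hold, but it is noetherianity---part of the very condition being transferred---that does the work, not the squaring observation. Two minor points: the relevant result from \cite{HS} makes $\Lambda$ an $(n+1)$-$T$-Koszul algebra rather than an $n$-$T$-Koszul one, and your detour through Eakin--Nagata is unnecessary since the paper's definition of $n$-representation tame already reads ``$Z(\Pi)$ noetherian and $\Pi$ module finite over $Z(\Pi)$''.
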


As applications, we establish that the (\textbf{Fg})-condition holds for large classes of algebras for which it was not previously known; see \cref{subsec: applications and examples}. This includes trivial extensions of $2$-representation infinite algebras obtained from \mbox{\emph{dimer models}} on the torus. Dimer models and their associated dimer algebras are central notions in mathematics and physics that first arose in the field of statistical mechanics and which have later been intensively studied in relation to string theory \mbox{\cite{FHVW,AK05,Kennaway}}. In mathematics, this is of particular importance in algebraic geometry, as Jacobian algebras obtained from dimer models provide examples of so-called non-commutative crepant resolutions; see \cite{vandenBergh'23}. 

By combining \cref{thm: intro2} with work of Nakajima \cite{Nakajima}, we obtain the result below. 

\begin{thmu}[see \cref{thm: Fg and consistent dimer algebras}]\label{thm: intro3}
Let $\Gamma$ be a dimer algebra associated to a consistent dimer model, and assume that the dimer model has a perfect matching inducing a grading such that $A \coloneq \Gamma_0$ is finite dimensional. Then the trivial extension $\Delta A$ of $A$ satisfies the \emph{(\textbf{Fg})}-condition. 
\end{thmu}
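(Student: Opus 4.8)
The plan is to recognize $\Delta A$ as an instance of the graded symmetric algebras handled by \cref{cor: fg}, and then to translate the resulting $2$-representation tameness condition into a finiteness statement about the dimer algebra $\Gamma$ that can be settled using Nakajima's work. First I would verify the hypotheses of \cref{cor: fg} with $n = 2$. The trivial extension $\Delta A = A \oplus \D A$, with $A$ placed in degree $0$ and the bimodule $\D A$ in degree $1$, is a graded symmetric finite dimensional algebra of highest degree $1$ whose degree-zero part is exactly $A = \Gamma_0$. It thus remains to check that $A$ is $2$-representation infinite. This is where consistency and the choice of perfect matching enter: the perfect matching determines a grading of $\Gamma$ for which $\Gamma$ is a bimodule $3$-Calabi--Yau algebra of Gorenstein parameter $1$, and under the assumption that $A = \Gamma_0$ is finite dimensional, the degree-zero part $A$ is $2$-representation infinite with $\Gamma$ its $3$-preprojective algebra $\Pi_3(A)$. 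I would make this identification precise, invoking the correspondence between such graded $3$-Calabi--Yau algebras and $2$-representation infinite algebras.

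With the hypotheses in place, \cref{cor: fg} reduces the problem to showing that $A$ is $2$-representation tame, which — in analogy with the classical criterion that a representation infinite hereditary algebra is tame exactly when its preprojective algebra is noetherian over its center — amounts to the statement that $\Pi_3(A) \cong \Gamma$ is noetherian and module finite over its center. This is precisely the content supplied by the theory of consistent dimer models: the dimer algebra of a consistent dimer model is a non-commutative crepant resolution of the associated Gorenstein toric $3$-fold, so by Nakajima's results \cite{Nakajima} its center is the noetherian coordinate ring of that $3$-fold and $\Gamma$ is finitely generated as a module over it. Feeding this back through the equivalence of \cref{cor: fg} yields the (\textbf{Fg})-condition for $\Delta A$.

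The main obstacle I anticipate is the second half of the first step: cleanly establishing that $A = \Gamma_0$ is $2$-representation infinite and that $\Gamma$ is its $3$-preprojective algebra, so that the abstract notion of $2$-representation tameness occurring in \cref{cor: fg} genuinely coincides with the concrete finiteness-over-center property that Nakajima verifies. Setting up this dictionary between the higher-representation-theoretic picture and the dimer-model picture is the crux; once it is in place, the two remaining implications are formal consequences of \cref{cor: fg} and \cite{Nakajima} respectively.
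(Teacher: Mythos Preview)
Your proposal is correct and follows essentially the same route as the paper's proof. The paper simply packages your two main steps as direct citations to \cite{Nakajima}: \cite[Proposition 3.5]{Nakajima} gives that $A$ is $2$-representation infinite (and hence that $\Gamma \simeq \Pi_3 A$), and the observation at the end of \cite[Section 3]{Nakajima} that $\Gamma$ is a non-commutative crepant resolution of its center yields $2$-representation tameness; then \cref{cor: fg} finishes the argument exactly as you describe.
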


The paper is structured as follows. In \cref{sec: preliminaries} we give an overview of some definitions and results that are needed in the rest of the paper. This includes an introduction to the (\textbf{Fg})-condition as well as necessary background concerning \mbox{$A_\infty$-algebras}. \cref{sec: general results} presents some general results providing key steps towards the proof of \cref{thm: intro1}. In \cref{sec: higher Koszul} we investigate when higher Koszul algebras satisfy the (\textbf{Fg})-condition and prove \cref{thm: intro1}. Building on this, we establish the strong connection between the (\textbf{Fg})-condition and higher Auslander--Reiten theory given in \cref{thm: intro2}. In \cref{subsec: applications and examples} we demonstrate how our results significantly extend the classes of algebras for which the answer to the motivating question is known, including \cref{thm: intro3} and several explicit examples. 

\subsection*{Conventions and notation}

Throughout this paper, let $n$ denote a positive integer. We always work over an algebraically closed field $k$.

Let $\Lambda$ be an algebra. We denote by $\modu \Lambda$ the category of finitely presented right modules over $\Lambda$. If $\Lambda = \oplus_{i \geq 0} \Lambda_i$ is positively graded, we write $\gr \Lambda$ for the category of finitely presented graded right $\Lambda$-modules and degree $0$ morphisms and $\stgrmodu \Lambda$ for the associated stable category. 

The notation $D$ is used for the duality $D(-) \coloneq \Hom_k(-,k)$, and we write $\Thick(X)$ for the thick subcategory generated by an object $X$. The composition of two consecutive arrows $i \xrightarrow[]{a} j \xrightarrow[]{b} k$ in a quiver is denoted \mbox{by $ab$}.

\section{Preliminaries}\label{sec: preliminaries}

In this section we give an overview of some definitions and results that are needed in the rest of the paper. We first give a brief introduction to the (\textbf{Fg})-condition in \cref{subsec: fg-condition}, before presenting some basic results regarding the centers of a positively graded algebra in \cref{subsec: center of graded algebra}. In \cref{subsec: A_infty} we recall some necessary background concerning $A_\infty$-algebras.

\subsection{The (\textbf{Fg})-condition} \label{subsec: fg-condition}

In this subsection we briefly recall notions related to the (\textbf{Fg})-condition, as well as stating and providing proofs of two results that are known to the experts, but not explicitly stated in the literature. For a more thorough introduction, see e.g.\ \cite{Solberg'06,W19}.

The \emph{enveloping algebra} of an algebra $\Lambda$ is given by \mbox{$\Lambda^{e} \coloneq \Lambda^{\op} \otimes_k \Lambda$}. Right modules over $\Lambda^{e}$ correspond to $\Lambda$-$\Lambda$-bimodules $M$ satisfying that $\lambda m = m\lambda$ for $\lambda \in k$ and $m \in M$. Note that we can regard $\Lambda$ as a right $\Lambda^{e}$-module by setting $a \cdot (a' \otimes_k a'') = a'aa''$. The $i$-\emph{th Hochschild cohomology} of $\Lambda$ can be defined as $\HH^{i}(\Lambda) \coloneq \Ext_{\Lambda^{e}}^{i}(\Lambda,\Lambda)$. Moreover, we call $\HH^*(\Lambda) = \oplus_{i \geq 0}\HH^{i}(\Lambda)$ the \emph{Hochschild cohomology ring} of $\Lambda$.

If $M$ is a right $\Lambda$-module and $\eta \in \HH^{i}(\Lambda)$ is regarded as an exact sequence of $\Lambda^{e}$-modules, then $M \otimes_\Lambda \eta$ remains exact since $\eta$ is split exact when considered as a sequence of left $\Lambda$-modules. This means that $M \otimes_\Lambda \eta$ is an element of $\Ext^{i}_\Lambda(M,M)$, and in this way one obtains a graded algebra morphism from $\HH^*(\Lambda)$ to $\Ext^*_\Lambda(M,M) = \oplus_{i \geq 0} \Ext^i_\Lambda(M,M)$ which we call the \emph{characteristic morphism}; see e.g.\ \cite[Section 3]{Solberg'06}. Note that whenever we regard $\Ext^*_\Lambda(M,M)$ as an $\HH^*(\Lambda)$-module, we mean with the module action induced by the characteristic morphism.

In the lemma below, we use the notation 
\[
\HH^*(\Lambda,-) \coloneq \Hom_{\De(\Lambda^{e})}^*(\Lambda, -) =\oplus_{i \in \mathbb{Z}}\Hom_{\De(\Lambda^{e})}(\Lambda, -[i]),
\]
where $\De(\Lambda^{e})$ is the derived category and $[i]$ denotes the $i$-th shift functor. Note that $\HH^*(\Lambda,\Lambda) \simeq \HH^*(\Lambda)$.

\begin{lem}\label{lem: noetherian endomorphism algebra and exact sequence}
Assume that $\HH^*(\Lambda)$ is noetherian. Consider a distinguished triangle $X \rightarrow Y \rightarrow Z \rightarrow X[1]$ in $\De(\Lambda^{e})$. If $~\HH^*(\Lambda,X)$ and $\HH^*(\Lambda,Z)$ are finitely generated \mbox{$\HH^*(\Lambda)$}-modules, then so is $\HH^*(\Lambda,Y)$. 
\end{lem}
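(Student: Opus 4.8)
The plan is to apply the cohomological functor $\Hom_{\De(\Lambda^{e})}(\Lambda,-)$ to the given distinguished triangle and then run a standard two-out-of-three argument for finite generation over the noetherian ring $\HH^*(\Lambda)$.

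First I would apply $\Hom_{\De(\Lambda^{e})}(\Lambda,-)$ to the triangle $X \to Y \to Z \to X[1]$. Since this is a cohomological functor on the triangulated category $\De(\Lambda^{e})$, the triangle yields a long exact sequence
\[
\cdots \to \HH^i(\Lambda,X) \to \HH^i(\Lambda,Y) \to \HH^i(\Lambda,Z) \to \HH^{i+1}(\Lambda,X) \to \cdots
\]
Assembling this over all $i \in \mathbb{Z}$ produces an exact sequence relating $\HH^*(\Lambda,X)$, $\HH^*(\Lambda,Y)$, and $\HH^*(\Lambda,Z)$, in which the connecting homomorphism raises cohomological degree by one.

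The crucial point I would verify is that every map in this long exact sequence is a homomorphism of graded $\HH^*(\Lambda)$-modules. The module structure on each $\HH^*(\Lambda,W)$ is given by composition in $\De(\Lambda^{e})$: an element $\alpha \in \HH^i(\Lambda) = \Hom_{\De(\Lambda^{e})}(\Lambda,\Lambda[i])$ acts on $\beta \in \Hom_{\De(\Lambda^{e})}(\Lambda,W[j])$ by $\beta[i] \circ \alpha \in \Hom_{\De(\Lambda^{e})}(\Lambda,W[i+j])$, that is, by precomposition after a shift. The maps induced by $X \to Y$ and $Y \to Z$, as well as the connecting map induced by $Z \to X[1]$, are all given by postcomposition (after the appropriate shift) with the corresponding morphism of the triangle. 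Since postcomposition and precomposition commute, each of these maps is $\HH^*(\Lambda)$-linear, so the long exact sequence is one of graded $\HH^*(\Lambda)$-modules. I expect this compatibility check to be the main, albeit routine, obstacle, as it is the only place where one must be careful about the interaction between the triangulated structure and the module action.

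Granting this, I would extract from the long exact sequence a short exact sequence of graded $\HH^*(\Lambda)$-modules
\[
0 \to \Img f \to \HH^*(\Lambda,Y) \to \Img g \to 0,
\]
where $f \colon \HH^*(\Lambda,X) \to \HH^*(\Lambda,Y)$ and $g \colon \HH^*(\Lambda,Y) \to \HH^*(\Lambda,Z)$ are the induced maps. Here $\Img f$ is a quotient of the finitely generated module $\HH^*(\Lambda,X)$ and hence finitely generated, while $\Img g$ is a submodule of $\HH^*(\Lambda,Z)$, which is a noetherian module since it is finitely generated over the noetherian ring $\HH^*(\Lambda)$, so that $\Img g$ is finitely generated as well. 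As $\HH^*(\Lambda,Y)$ is an extension of two finitely generated $\HH^*(\Lambda)$-modules, it is itself finitely generated, which completes the argument.
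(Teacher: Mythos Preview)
Your proposal is correct and follows essentially the same approach as the paper: apply $\Hom_{\De(\Lambda^{e})}(\Lambda,-)$ to the triangle, obtain the exact sequence $\HH^*(\Lambda,X) \to \HH^*(\Lambda,Y) \to \HH^*(\Lambda,Z)$, and conclude by the standard two-out-of-three argument using noetherianity. The only difference is that you spell out the $\HH^*(\Lambda)$-linearity of the maps in the long exact sequence, which the paper leaves implicit.
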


\begin{proof}
The long exact sequence induced by applying $\Hom_{\De(\Lambda^{e})}(\Lambda,-)$ to the distinguished triangle $X \rightarrow Y \rightarrow Z \rightarrow X[1]$ yields an exact sequence 
\[
\HH^*(\Lambda,X) \rightarrow \HH^*(\Lambda,Y) \rightarrow \HH^*(\Lambda,Z).
\]
Since $\HH^*(\Lambda)$ is noetherian, the image of the rightmost morphism in this sequence is a finitely generated \mbox{$\HH^*(\Lambda)$-module} by our assumption that $\HH^*(\Lambda,Z)$ is finitely generated. Hence, as $\HH^*(\Lambda,X)$ is a finitely generated $\HH^*(\Lambda)$-module and thus the image of the leftmost morphism is as well, we deduce that $\HH^*(\Lambda,Y)$ is also a finitely generated $\HH^*(\Lambda)$-module.
\end{proof}

Assume now that $\Lambda$ is finite dimensional and set $S \coloneq \Lambda/\rad \Lambda$. We say that $\Lambda$ \emph{satisfies the} (\textbf{Fg})\emph{-condition} if $\HH^{*}(\Lambda)$ is noetherian and $\Ext^{*}_{\Lambda}(S,S)$ is finitely generated as an $\HH^{*}(\Lambda)$-module; see e.g.\ \cite[Proposition 5.7]{Solberg'06}. We note the following.

\begin{prop}\label{prop: equivalent fg}
Let $\Lambda$ be finite dimensional. Consider $M \in \modu \Lambda$ and assume that $\Thick (M) = \De^b(\Lambda)$. Then $\Lambda$ satisfies the \emph{(\textbf{Fg})}-condition if and only if $\HH^{*}(\Lambda)$ is noetherian and $\Ext^{*}_{\Lambda}(M,M)$ is finitely generated as an $\HH^{*}(\Lambda)$-module.
\end{prop}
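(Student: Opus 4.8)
The plan is to reduce both implications to a single symmetric statement: if $\HH^*(\Lambda)$ is noetherian and $G \in \De^b(\Lambda)$ satisfies $\Thick(G) = \De^b(\Lambda)$ together with $\Ext^*_\Lambda(G,G)$ being finitely generated over $\HH^*(\Lambda)$, then $\Ext^*_\Lambda(X,Y)$ is finitely generated over $\HH^*(\Lambda)$ for all $X,Y \in \De^b(\Lambda)$. Granting this, the proposition follows quickly. First one records that $\Thick(S) = \De^b(\Lambda)$ for any finite dimensional $\Lambda$: every finite length module admits a finite filtration with simple subquotients, each short exact sequence is a triangle, so $\modu\Lambda \subseteq \Thick(S)$ and hence $\De^b(\Lambda) = \Thick(\modu\Lambda) = \Thick(S)$. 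Thus $S$ and $M$ play symmetric roles. Applying the symmetric statement with $G=S$ turns the (\textbf{Fg})-condition into finite generation of $\Ext^*_\Lambda(M,M)$, while applying it with $G=M$ (now using the hypothesis $\Thick(M)=\De^b(\Lambda)$) runs the converse and recovers finite generation of $\Ext^*_\Lambda(S,S)$. Since noetherianity of $\HH^*(\Lambda)$ is assumed throughout, this yields the desired equivalence.

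To prove the symmetric statement I would argue by a two-variable thick subcategory argument feeding on \cref{lem: noetherian endomorphism algebra and exact sequence}. The bridge between one-sided $\Ext$ and Hochschild cohomology is the standard $\HH^*(\Lambda)$-linear isomorphism $\Ext^*_\Lambda(X,Y) \cong \HH^*(\Lambda, \RHom_k(X,Y))$, where $\RHom_k(X,Y)$ carries its natural $\Lambda^{e}$-module structure, the left-hand action being the one induced by the characteristic morphism and the right-hand one being by precomposition. Fixing the first argument $G$, consider the full subcategory $\mathcal{C} = \{Y \in \De^b(\Lambda) : \HH^*(\Lambda,\RHom_k(G,Y)) \text{ is finitely generated over } \HH^*(\Lambda)\}$. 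Since $\RHom_k(G,-)$ is triangulated, it sends a triangle in $Y$ to a triangle in $\De(\Lambda^{e})$, and \cref{lem: noetherian endomorphism algebra and exact sequence}, applied to this triangle and its rotations, shows that $\mathcal{C}$ enjoys the two-out-of-three property; closure under shifts and under direct summands is immediate, as a summand of a finitely generated module over a noetherian ring is finitely generated. Hence $\mathcal{C}$ is thick, and since $G \in \mathcal{C}$ by hypothesis we obtain $\mathcal{C} \supseteq \Thick(G) = \De^b(\Lambda)$, that is, $\Ext^*_\Lambda(G,Y)$ is finitely generated for every $Y$.

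The second sweep is entirely analogous but in the first variable: fixing any $Y \in \De^b(\Lambda)$ and using that $\RHom_k(-,Y)$ is contravariant triangulated, the subcategory $\{X : \Ext^*_\Lambda(X,Y)\text{ is finitely generated}\}$ is again thick by the same application of \cref{lem: noetherian endomorphism algebra and exact sequence}, and it contains $G$ by the first sweep; therefore it equals $\Thick(G)=\De^b(\Lambda)$. Combining the two sweeps gives finite generation of $\Ext^*_\Lambda(X,Y)$ for all $X,Y$, in particular for $X=Y=M$ (respectively $X=Y=S$), as required. The main obstacle I anticipate is bookkeeping rather than conceptual: one must verify that the isomorphism $\Ext^*_\Lambda(X,Y)\cong \HH^*(\Lambda,\RHom_k(X,Y))$ is genuinely $\HH^*(\Lambda)$-linear, so that \cref{lem: noetherian endomorphism algebra and exact sequence} applies verbatim, and one must check the two-out-of-three closure in all three triangle positions, which is legitimate because $\mathcal{C}$ is shift-stable and triangles may be rotated. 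Everything else, namely the filtration of modules by simples and the stability of finite generation under summands and shifts over the noetherian ring $\HH^*(\Lambda)$, is routine.
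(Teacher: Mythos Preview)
Your argument is correct and uses the same core ingredients as the paper's proof: the isomorphism $\Ext^*_\Lambda(X,Y)\cong\HH^*(\Lambda,\Hom_k(X,Y))$ (which the paper cites from Cartan--Eilenberg), the triangulated functors $\Hom_k(G,-)$ and $\Hom_k(-,Y)$ into $\De^b(\Lambda^{e})$, and \cref{lem: noetherian endomorphism algebra and exact sequence} to propagate finite generation along triangles. The organizational difference is that the paper pushes everything into $\De^b(\Lambda^{e})$ and proves the stronger statement $\Thick(\Hom_k(M,M))=\Thick(\Hom_k(S,S))=\De^b(\Lambda^{e})$, which requires identifying the simple $\Lambda^{e}$-modules as $\Hom_k(S_i,S_j)$ and hence uses that $k$ is algebraically closed; your two-variable sweep stays in $\De^b(\Lambda)$ and only needs that each of $\Hom_k(S,S)$, $\Hom_k(M,M)$ lies in the thick closure of the other, which follows directly from $\Thick(S)=\Thick(M)=\De^b(\Lambda)$ without any hypothesis on $k$. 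So your route is a mild streamlining of the paper's, trading the explicit description of $\De^b(\Lambda^{e})$ for a cleaner d\'evissage; both buy the same conclusion, but yours works over any field. The bookkeeping point you flag---$\HH^*(\Lambda)$-linearity of the Cartan--Eilenberg isomorphism---is indeed the only thing to check, and it is standard.
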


\begin{proof}
This result follows by a variation of an argument from \cite[\mbox{Proposition 2.4}]{EHTSS} where we replace filtrations in simple $\Lambda^{e}$-modules by taking cones of morphisms \mbox{in $\De^b(\Lambda^{e})$}.

Note first that we clearly have 
\[
S \in \Thick (M) = \De^b(\Lambda)
\]
and that $\Hom_k(M, -)$ defines a triangulated functor from $\De^b(\Lambda)$ to $\De^b(\Lambda^{e})$. This implies that $\Hom_k (M,S) \in \Thick(\Hom_k (M,M))$. Similarly, we observe that $\Hom_k(-, S)$ is a triangulated functor from $\De^b(\Lambda)^{\op}$ to $\De^b(\Lambda^{e})$, and thus 
\begin{equation} \label{eq: proof of equivalent statements fg}
    \Hom_k (S,S) \in \Thick(\Hom_k (M,S)) \subseteq \Thick(\Hom_k (M,M)).
\end{equation}

We next use that $\De^b(\Lambda^{e}) = \Thick(\Lambda^{e}/\rad \Lambda^{e})$ since $\Lambda^{e}$ is finite dimensional. Note that $\rad \Lambda^{e} \simeq \rad \Lambda \otimes_k \Lambda +  \Lambda \otimes_k \rad \Lambda$ as $k$ is algebraically closed, and thus any simple $\Lambda^{e}$-module is of the form $D(S_i) \otimes_k S_j \simeq \Hom_k(S_i, S_j)$ for simple \mbox{$\Lambda$-modules} $S_i$ and $S_j$. This yields that $\Lambda^{e}/\rad \Lambda^{e}$ is isomorphic to $\Hom_k (S,S)$, which gives $\De^b(\Lambda^{e}) = \Thick (\Hom_k (S,S))$. Combining this with \eqref{eq: proof of equivalent statements fg}, we obtain
\[
\De^b(\Lambda^{e}) = \Thick (\Hom_k (S,S)) = \Thick(\Hom_k (M,M)).
\]

In particular, the argument above shows that $\Hom_k (S,S) \in \Thick(\Hom_k (M,M))$ and $\Hom_k (M,M) \in \Thick(\Hom_k (S,S))$. We can thus use \cref{lem: noetherian endomorphism algebra and exact sequence} to deduce that 
\[
\Ext^*_{\Lambda}(M,M) \simeq \HH^*(\Lambda, \Hom_k (M,M))
\]
is finitely generated as an $\HH^*(\Lambda)$-module if and only if the same is true for 
\[
\Ext^*_{\Lambda}(S,S) \simeq \HH^*(\Lambda, \Hom_k (S,S)). 
\]
Note that the two isomorphisms above follow from \cite[Theorem IX.2.8a]{CE56}.
\end{proof}

\subsection{The centers of a graded algebra}\label{subsec: center of graded algebra}

In this subsection we provide proofs of some results concerning the center and the graded center of a positively graded algebra that are needed later in the paper. Note that we use the notation $Z(\Lambda)$ for the (ungraded) center of an algebra $\Lambda$. If $\Lambda = \oplus_{i \geq 0} \Lambda_i$ is positively graded, then the \emph{graded center} of $\Lambda$ is given by
\[
Z_{\gr}(\Lambda) \coloneq \{x \in \Lambda_i \mid xy = (-1)^{ij}yx ~\text{for any}~ y \in \Lambda_j\}.
\]

Our first observation is that the center $Z(\Lambda)$ of a graded algebra $\Lambda = \oplus_{i \geq 0} \Lambda_i$ is again a graded algebra. We include a proof for the convenience of the reader.

\begin{prop}\label{prop: center of graded algebra is graded}
Let $\Lambda = \oplus_{i \geq 0} \Lambda_i$ be a positively graded algebra.
Then $Z(\Lambda)$ is a positively graded subalgebra of $\Lambda$. 
\end{prop}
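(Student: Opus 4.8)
The plan is to show that $Z(\Lambda)$ is a \emph{graded} subspace of $\Lambda$, in the sense that it is closed under taking homogeneous components; once this is established, the decomposition $Z(\Lambda) = \oplus_{i \geq 0}\bigl(Z(\Lambda) \cap \Lambda_i\bigr)$ is immediate, and positivity follows at once from the fact that $\Lambda_i = 0$ for $i < 0$. That $Z(\Lambda)$ is a subalgebra is standard and requires no grading: it contains the identity $1 \in \Lambda_0$ and is closed under addition and multiplication.

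The core step is to verify that if $z \in Z(\Lambda)$ decomposes as $z = \sum_i z_i$ with $z_i \in \Lambda_i$, then each homogeneous component $z_i$ again lies in $Z(\Lambda)$. To this end, I would fix an arbitrary homogeneous element $y \in \Lambda_j$ and use centrality of $z$ to write $zy = yz$. Expanding both sides as $\sum_i z_i y$ and $\sum_i y z_i$, and invoking that multiplication respects the grading so that $z_i y, y z_i \in \Lambda_{i+j}$, I would compare the homogeneous components of degree $i+j$ on each side. Uniqueness of the homogeneous decomposition then forces $z_i y = y z_i$ for every index $i$.

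Since every element of $\Lambda$ is a finite sum of homogeneous elements, the identity $z_i y = y z_i$ for all homogeneous $y$ upgrades to $z_i$ commuting with all of $\Lambda$, that is, $z_i \in Z(\Lambda)$. This produces the desired splitting of an arbitrary central element into central homogeneous pieces, which completes the argument.

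The only subtlety, and hence the main point to handle carefully, is the degree-by-degree comparison: one must use both that the homogeneous decomposition of each element is unique and that $\Lambda_i \Lambda_j \subseteq \Lambda_{i+j}$, so that the products $z_i y$ and $y z_i$ land in the predicted graded pieces and can be matched component by component. Beyond this routine bookkeeping there is no genuine obstacle.
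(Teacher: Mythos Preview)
Your proposal is correct and follows essentially the same argument as the paper: decompose a central element into homogeneous components and compare the graded pieces of $zy$ and $yz$ to conclude each $z_i$ is central. If anything, your version is slightly more careful than the paper's in that you explicitly reduce to homogeneous $y$ before invoking uniqueness of the graded decomposition, whereas the paper writes $\lambda \in \Lambda$ without stating this reduction.
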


\begin{proof}
Let $z \in Z(\Lambda)$. Since $\Lambda$ is positively graded, we can write 
\[
z = z_0 + z_1 + \cdots z_t
\]
with $z_i \in \Lambda_i$. For $\lambda \in \Lambda$, we then get
\[
\lambda z = \lambda z_0 + \lambda z_1 + \cdots \lambda z_t
\]
and
\[
z \lambda = z_0 \lambda + z_1 \lambda + \cdots z_t \lambda.
\]
Consequently, an equality $\lambda z = z\lambda$ implies that $\lambda z_i = z_i \lambda$ for all $i$, so \mbox{$z_i \in Z(\Lambda) \cap \Lambda_i$}. This gives an induced grading $Z(\Lambda)_i = Z(\Lambda) \cap \Lambda_i$, making $Z(\Lambda)$ a positively graded subalgebra of $\Lambda$.
\end{proof}

If $\Lambda = \oplus_{i \geq 0} \Lambda_i$ is a positively graded algebra and $\ell$ is a positive integer, then the $\ell$-\emph{Veronese subalgebra of $\Lambda$} is given by $\Lambda_{\ell *} \coloneq \oplus_{i \geq 0} \Lambda_{\ell i}$. Note that \cref{prop: center of graded algebra is graded} allows us to form \mbox{$\ell$-Veronese} subalgebras of the center of a positively graded algebra.

The next result enables us to pass between finite generation conditions formulated in terms of centers to conditions formulated in terms of graded centers. 

\begin{prop}\label{prop: mf over gr comm subalgebra iff mf over l-Veronese of that subalgebra}
Let $\Lambda = \oplus_{i \geq 0} \Lambda_i$ be a positively graded algebra with $Z \subseteq \Lambda$ a commutative or graded commutative subalgebra. The following statements are equivalent:
\begin{enumerate}
    \item $Z$ is noetherian and $\Lambda$ is module finite over $Z$.
    \item $Z_{2 *}$ is noetherian and $\Lambda$ is module finite over $Z_{2 *}$.
\end{enumerate}
\end{prop}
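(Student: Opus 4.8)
The plan is to establish the two implications separately, treating $(2) \Rightarrow (1)$ first since it is essentially formal, and then isolating the genuine commutative-algebra input in $(1) \Rightarrow (2)$. Throughout I regard $\Lambda$ as a (left) $Z$-module via multiplication in $\Lambda$ and restrict this structure along the inclusion $Z_{2*} \subseteq Z$. The only structural facts I will need are transitivity of module finiteness and the standard principle that a ring which is module finite over a noetherian subring is itself noetherian, via the observation that its ideals are submodules of a noetherian module.

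For $(2) \Rightarrow (1)$, I would argue as follows. Assuming $Z_{2*}$ is noetherian and $\Lambda$ is module finite over $Z_{2*}$, the algebra $\Lambda$ is a noetherian $Z_{2*}$-module. Since $Z_{2*} \subseteq Z \subseteq \Lambda$ and $Z$ is a subalgebra, $Z$ is a $Z_{2*}$-submodule of $\Lambda$, hence finitely generated over $Z_{2*}$; as $Z_{2*}$ is noetherian, this makes $Z$ a noetherian $Z_{2*}$-module, so the ascending chain condition holds on its $Z_{2*}$-submodules and in particular on its ideals, yielding that $Z$ is noetherian. Finally, writing $\Lambda = \sum_j Z_{2*} m_j$ for finitely many $m_j$ and using $Z_{2*} \subseteq Z$ gives $\Lambda = \sum_j Z m_j$, so $\Lambda$ is module finite over $Z$.

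For $(1) \Rightarrow (2)$, the content reduces to the graded Veronese statement that a noetherian (graded) commutative ring $Z$ is module finite over $Z_{2*}$, with $Z_{2*}$ again noetherian; transitivity then upgrades module finiteness of $\Lambda$ over $Z$ to module finiteness over $Z_{2*}$. To prove this I would use that a noetherian graded $Z$ is finitely generated as a $Z_0$-algebra by homogeneous elements $f_1, \dots, f_s$ of positive degree, with $Z_0 \cong Z/Z_{+}$ noetherian, where $Z_{+} = \oplus_{i > 0} Z_i$. Each $f_i$ satisfies the monic relation $X^2 - f_i^2$ over the subalgebra $R = Z_0[f_1^2, \dots, f_s^2] \subseteq Z_{2*}$, which is a genuinely commutative noetherian ring since the $f_i^2$ have even degree and are therefore central. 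Consequently $Z$ is spanned over $R$ by the squarefree monomials in the $f_i$ and is module finite over $R$, hence a noetherian $R$-module; the intermediate subalgebra $Z_{2*}$ is then a finitely generated, and thus noetherian, $R$-module, while $Z$ is module finite over $Z_{2*} \supseteq R$.

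I expect the main obstacle to be the commutative-algebra core of $(1) \Rightarrow (2)$, namely verifying that the standard integrality argument for Veronese subalgebras survives in the graded-commutative setting. This is handled by the observation that even-degree elements are central in $Z$, so both $R$ and $Z_{2*}$ lie in the center of $Z$ and the reduction to squarefree monomials introduces only signs lying in $Z_0 \subseteq R$. The remaining bookkeeping — transitivity of module finiteness and the passage between ideals of $Z$ and $Z_{2*}$-submodules — is routine.
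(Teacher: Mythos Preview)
Your argument is correct, and for $(2)\Rightarrow(1)$ it is essentially the same as the paper's. For $(1)\Rightarrow(2)$, however, you take a genuinely different route. The paper argues directly: choosing homogeneous generators $g_j$ (even degree) and $h_j$ (odd degree) of the ideal $Z_{>0}$, it shows by induction on degree that every odd-degree element lies in $\sum_j h_j Z_{2*}$, so $Z$ is module finite over $Z_{2*}$; noetherianity of $Z_{2*}$ is then deduced by lifting an ideal $I\subseteq Z_{2*}$ to the ideal $I\oplus Z_{2*+1}I\subseteq Z$ and checking that this assignment reflects inclusions and equalities. Your approach instead passes through the auxiliary commutative subring $R=Z_0[f_1^2,\dots,f_s^2]$, invokes the Hilbert basis theorem to get $R$ noetherian, and obtains module finiteness of $Z$ over $R$ via the squarefree-monomial spanning set; everything about $Z_{2*}$ then drops out from $R\subseteq Z_{2*}\subseteq Z$ and standard noetherian bookkeeping. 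Your method is slicker and more conceptual, trading the paper's explicit induction and ideal-lifting for a one-line integrality argument plus Hilbert's theorem; the paper's version is a bit more self-contained and avoids appealing to finite generation of $Z$ as a $Z_0$-algebra or to the Hilbert basis theorem. Both handle the graded-commutative case for the same underlying reason, namely that even-degree elements are central and the only obstructions are signs in $Z_0$.
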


\begin{proof}
Let us first assume \emph{(2)}. Observe that $\Lambda$ is module finite over $Z$ since $Z_{2*} \subseteq Z$. To see that $Z$ is noetherian, note that $Z$ is a $Z_{2 *}$-submodule of $\Lambda$. As $\Lambda$ is module finite over $Z_{2 *}$ and $Z_{2 *}$ is noetherian, we have that $Z$ is also module finite over $Z_{2*}$. This yields that $Z$ is noetherian as a module over $Z_{2*}$. Since any chain of ideals in $Z$ can be regarded as a chain of $Z_{2*}$-submodules of $Z$, we conclude that $Z$ is a noetherian algebra, showing \emph{(1)}.

Assume now that \emph{(1)} holds. We first prove that $\Lambda$ is module finite over $Z_{2 *}$. As $\Lambda$ is module finite over $Z$ by assumption, it suffices to show that $Z$ is module finite over $Z_{2 *}$. Since $Z$ is noetherian and positively graded, we have that $Z_{>0}$ is a finitely generated $Z$-module. We can thus pick finitely many homogeneous generators of $Z_{>0}$ such that $g_{j}$ (resp.\ $h_{j}$) is the $j$-th generator of even (resp.\ odd) degree. It is straightforward to see that $Z$ is module finite over $Z_{2 *}$ provided that any $x \in Z_{2 i + 1}$ for an integer $i \geq 0$ can be written in the form 
\[
x = \sum_{j} h_{j} z_{j}(x)
\]
for homogeneous elements $z_{j}(x) \in Z_{2 *}$.
To show that this condition is indeed satisfied, observe first that $x \in Z_{2i+1}$ can be written as
\[
x = \sum_j g_{j} y_{j}(x) + \sum_j h_{j} z_{j}(x)
\]
for homogeneous elements $y_{j}(x)$ of odd degree. We now proceed by induction \mbox{on $i$}. The claim holds in the case $i = 0$, since the first sum in the expression above is then zero as $g_{j}$ is a generator of $Z_{>0}$ and thus is of positive degree. We next assume that the claim holds for any $0 \leq m < i$ and show that it then also holds for $i$. Again using that $g_{j}$ is of positive degree, we know that the degree of $y_{j}(x)$ must be less than $2i + 1$. 
Applying the induction hypothesis, we get 
\begin{align*}
g_{j}y_{j}(x) 
& = g_{j}\sum_{k}h_{k}z_{k}(y_{j}(x)). 
\end{align*}
Using that $Z$ is commutative or graded commutative, this allows us to rewrite every term in the sum $\sum_j g_{j} y_{j}(x)$ as an expression of the desired form, and so the claim follows. We can thus conclude that $\Lambda$ is module finite over $Z_{2 *}$.

To see that $Z_{2*}$ is noetherian, note that any ideal $I \subseteq Z_{2*}$ gives rise to an ideal 
\[
I + Z_{2* + 1}I = I \oplus Z_{2* + 1}I \subseteq Z,
\]
where we use the notation $Z_{2* + 1}=\oplus_{i \geq 0}Z_{2i+1}$. Note also that we have an inclusion of ideals $I \subseteq J \subseteq Z_{2*}$ if and only if 
\[
I \oplus Z_{2* + 1}I \subseteq J \oplus Z_{2* + 1}J \subseteq Z.
\]
Moreover, the analogue statement holds when replacing $\subseteq$ by $=$. For all of the observations above, we use that $Z$ is commutative or graded commutative and that $I \cap Z_{2* + 1}I = \{0\}$ since the homogeneous components of elements in $I$ and $Z_{2* + 1}I$ are non-zero in even and odd degrees, respectively. It follows that any ascending chain of ideals in $Z_{2*}$ stabilizes since the induced chain does so in $Z$, and hence $Z_{2*}$ is noetherian. 
This finishes the proof that \emph{(1)} implies \emph{(2)}.
\end{proof}

\subsection{Background on $A_\infty$-algebras}\label{subsec: A_infty}

In this subsection we briefly recall necessary background on $A_\infty$-algebras that will be used in \cref{sec: general results} and \cref{sec: higher Koszul}. For a more thorough introduction to this topic, see e.g.\ \cite{Keller-Ainfty-survey-2,Keller-Ainfty-survey-3,Keller-Ainfty-survey}. 

An \emph{$A_\infty$-algebra} is a $\mathbb{Z}$-graded vector space 
\[
\Gamma = \bigoplus_{i \in \mathbb{Z}} \Gamma^{i}
\]
together with graded $k$-linear maps  
\[
m_d \colon \Gamma^{\otimes d} \rightarrow \Gamma
\]
of degree $2 - d$ for $d \geq 1$ satisfying certain relations. We will not make explicit use of these relations except in a few special cases, and refer the reader to e.g.\ \cite[Section 3.1]{Keller-Ainfty-survey-2} for their general description. It follows from these relations that $\Gamma$ is a complex with differential $m_1$. Moreover, if $m_d = 0$ for $d \geq 3$, then $\Gamma$ is a dg-algebra with multiplication given by the map $m_2 \colon \Gamma \otimes \Gamma \rightarrow \Gamma$. Conversely, any dg-algebra $\Gamma$ yields an $A_\infty$-algebra with $m_d=0$ for $d \geq 3$ by choosing $m_1$ and $m_2$ to be given by its differential and its multiplication, respectively. The reader is referred to \cite{K94} for an introduction to dg-algebras and dg-homological algebra; see also \cite{Keller-dg-survey}.

We need the following result, where we write $\Hm^*(\Gamma) = \oplus_{i \in \mathbb{Z}}\Hm^i(\Gamma)$ for the cohomology of an $A_\infty$-algebra $\Gamma$. For the definition of morphisms and quasi-isomorphisms of $A_\infty$-algebras, see e.g.\ \cite[Section 3.4]{Keller-Ainfty-survey-2}. Note that in the theorem below, the notation $m_d$ is used for the maps giving the $A_\infty$-structure of $\Hm^*(\Gamma)$, while $m_d^\Gamma$ is used for the maps associated to $\Gamma$.

\begin{thm}\label{thm: A_inf} \emph{(}\cite{Kadeishvili}\emph{, see also }\cite[Theorem 3.3]{Keller-Ainfty-survey-2}\emph{.)}
    Let $\Gamma$ be an $A_\infty$-algebra. Then $\Hm^*(\Gamma)$ admits an $A_\infty$-algebra structure such that the following statements hold:
    \begin{enumerate}
        \item One has $m_1 = 0$, and $m_2$ is induced by $m_2^\Gamma$.
        \item There is a quasi-isomorphism of $A_\infty$-algebras $\Hm^*(\Gamma) \rightarrow \Gamma$ that induces the identity in cohomology. 
    \end{enumerate}
    Moreover, this structure is unique up to (non-unique) isomorphism of $A_\infty$-algebras. 
\end{thm}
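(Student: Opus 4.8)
The plan is to prove this by Kadeishvili's inductive construction, transferring the $A_\infty$-structure of $\Gamma$ onto its cohomology along a chosen homotopy retraction. Since we work over the field $k$, the complex $(\Gamma, m_1^\Gamma)$ splits, so first I would fix a $k$-linear section $f_1 = i \colon \Hm^*(\Gamma) \to \Gamma$ sending each cohomology class to a representing cocycle, a projection $p \colon \Gamma \to \Hm^*(\Gamma)$ with $pi = \mathrm{id}$, and a homotopy $h \colon \Gamma \to \Gamma$ of degree $-1$ satisfying $\mathrm{id}_\Gamma - ip = m_1^\Gamma h + h m_1^\Gamma$, arranged so that the side conditions $h^2 = 0$, $hi = 0$ and $ph = 0$ hold. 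Setting $m_1 = 0$ on $\Hm^*(\Gamma)$ and $m_2 = p\, m_2^\Gamma (i \otimes i)$ realizes the multiplication induced by $m_2^\Gamma$, which gives statement (1).

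Next I would build the higher products $m_d$ on $\Hm^*(\Gamma)$ together with the higher components $f_d \colon \Hm^*(\Gamma)^{\otimes d} \to \Gamma$ (of degree $1-d$) of an $A_\infty$-morphism $f \colon \Hm^*(\Gamma) \to \Gamma$ simultaneously, by induction on $d$. Writing out the defining $A_\infty$-morphism relation in arity $d$, the only terms involving a product or a morphism component of top arity $d$ are $i\, m_d$ (the $s = d$ summand on the $\Hm^*(\Gamma)$-side) and $m_1^\Gamma f_d$ (the $q = 1$ summand on the $\Gamma$-side); every remaining term assembles into an expression $U_d$ built only from $m_r$, $f_r$ and $m_r^\Gamma$ with $r < d$. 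The inductive step thus reduces to solving $m_1^\Gamma f_d = \pm\, i\, m_d + U_d$.

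The key point, and the step I expect to be the main obstacle, is verifying that $U_d$ is a cocycle, i.e.\ $m_1^\Gamma U_d = 0$, which requires feeding the $A_\infty$-relations of $\Gamma$ into the lower-arity relations already established by induction and is a delicate but routine bookkeeping of signs and index ranges. Granting this, since $f_1 = i$ induces an isomorphism in cohomology, I would define $m_d$ so that $i\, m_d$ represents the cohomology class of $U_d$ (equivalently $m_d = \pm\, p\, U_d$); then $U_d \mp i\, m_d$ is a coboundary, and the side conditions let me take $f_d = \pm\, h(U_d \mp i\, m_d)$. By construction $m_1 = 0$, the relations close up, and $f$ induces $p$ and hence the identity in cohomology, giving the quasi-isomorphism of statement (2).

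For the uniqueness clause, suppose $(\Hm^*(\Gamma), m)$ and $(\Hm^*(\Gamma), m')$ are two such structures, each equipped with a quasi-isomorphism to $\Gamma$ inducing the identity in cohomology. Composing one quasi-isomorphism with a quasi-inverse of the other (using that $A_\infty$-quasi-isomorphisms are invertible up to quasi-isomorphism) yields an $A_\infty$-quasi-isomorphism $g \colon (\Hm^*(\Gamma), m) \to (\Hm^*(\Gamma), m')$ whose linear component $g_1$ is a quasi-isomorphism of complexes with zero differential, hence an isomorphism. Since an $A_\infty$-morphism with invertible linear term admits an $A_\infty$-inverse, $g$ is an isomorphism of $A_\infty$-algebras, which establishes uniqueness up to (non-unique) isomorphism.
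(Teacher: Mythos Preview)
The paper does not give its own proof of this statement: it is quoted as a classical result of Kadeishvili, with a reference to Keller's survey, and is used as a black box. So there is nothing in the paper to compare your argument against.

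That said, your sketch is the standard Kadeishvili construction and is essentially correct in outline. A couple of minor remarks. First, the verification that $U_d$ is an $m_1^\Gamma$-cocycle is indeed the technical heart; it is not merely ``routine bookkeeping'' but does go through, and you are right to flag it. Second, for the uniqueness clause you invoke that $A_\infty$-quasi-isomorphisms admit quasi-inverses; this is true over a field but is itself a nontrivial fact (usually proved via the bar construction or an obstruction argument), so in a self-contained write-up you would want to either cite it or indicate how it follows. With those caveats, your proposal is a faithful account of the proof the cited references contain.
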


The theorem above will be particularly relevant in the case of the dg-algebra \mbox{$\Gamma = \REnd_\Lambda(M)$} for a $\Lambda$-module $M$,  allowing us to endow \mbox{$\Hm^*(\Gamma) \simeq \Ext^*_\Lambda(M,M)$} with an $A_\infty$-structure with $m_1 = 0$ and $m_2$ the usual multiplication satisfying that $\Gamma$ and $\Hm^*(\Gamma)$ are quasi-isomorphic as $A_\infty$-algebras. Note that an $A_\infty$-algebra with $m_1 = 0$ is said to be \emph{minimal}. 

In the setup of \cref{thm: A_inf}, if the $A_\infty$-algebra structure of $\Hm^*(\Gamma)$ can be chosen such that $m_d = 0$ for $d \geq 3$ (i.e.\ it can be chosen to simply be an associative graded algebra), then $\Gamma$ is called \emph{formal}.

\section{Some general tools} \label{sec: general results}

The aim of this section is to establish \cref{prop: general result}, which is a key ingredient in the proof of \cref{thm: intro1}. Although the focus of this paper is to investigate the (\textbf{Fg})-condition from the viewpoint of higher Koszul algebras, the results of this section are applicable in a more general setup. Note that in \cref{sec: higher Koszul}, we will specialize to the case where $\Lambda$ is an $n$-$T$-Koszul algebra and $M=T$.

\begin{setup}
Throughout this section, let $\Lambda$ be a finite dimensional algebra and consider $M \in \modu \Lambda$. Let $X = pM$ be a fixed projective resolution of $M$, and set \mbox{$\Gamma \coloneq \REnd_\Lambda(M)$}.
\end{setup}

Note that we think of $\Gamma$ as a dg-algebra with 
\[
\Gamma^i = \prod_{m \in \Z} \Hom_{\Lambda}(X^{m}, X^{m + i})
\]
for $i \in \mathbb{Z}$ endowed with the standard super commutator differential defined by
\[
d(f) = d_{X} \circ f - (-1)^i f\circ d_{X}
\]
for $f \in \Gamma^i$. The projective resolution $X = pM$ of $M$ is an $\Gamma$-$\Lambda$-dg-bimodule in the sense of \cite[Section 3.8]{Keller-dg-survey}. 

Using the theory of standard lifts as in \cite[Section 7.3]{K94}, we get an equivalence
\[
\RHom_\Lambda(X, -) \colon \Thick (M) \longrightarrow \De^{\perf}(\Gamma),  
\]
where $\De^{\perf}(\Gamma)$ denotes the subcategory of perfect objects in the derived category $\De(\Gamma)$. Note that we here use that $\De(\Lambda)$ is idempotent complete. The equivalence above has quasi-inverse given by 
\[
- \otimes_{\Gamma}^{\bbL} X \colon \De^{\perf}(\Gamma) \longrightarrow \Thick (M).
\]
This yields that the functor 
\[
- \otimes_{\Gamma}^{\bbL} X \colon \De^{\perf}(\Gamma) \longrightarrow \De(\Lambda)
\]
is fully faithful.

We next want to prove that the functor
\[
X \otimes_{\Lambda}^{\bbL} - \colon \De^{\perf} (\Lambda^{\op}) \longrightarrow \De(\Gamma^{\op})
\]
is also fully faithful whenever $\Thick (M) = \De^b(\Lambda)$. 
This is shown in \cite[Theorem 4.6 b)]{K03} in the case where
\[
- \otimes_{\Gamma}^{\bbL} X \colon \De(\Gamma) \longrightarrow \De(\Lambda)
\]
is an equivalence. An analogue proof works under our assumptions, as demonstrated in the following.

\begin{lem}\label{lem: tensor ff 2}
If $\Thick (M) = \De^b(\Lambda)$, then the functor
\[
X \otimes_{\Lambda}^{\bbL} - \colon \De^{\perf} (\Lambda^{\op}) \longrightarrow \De(\Gamma^{\op})
\]
is fully faithful.
\end{lem}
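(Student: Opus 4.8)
The plan is to reduce the statement to a single biduality computation and then resolve that computation using $k$-linear duality together with the equivalence $F \coloneq -\otimes_\Gamma^\bbL X \colon \De^{\perf}(\Gamma) \to \Thick(M) = \De^b(\Lambda)$ and its quasi-inverse $G \coloneq \RHom_\Lambda(X,-)$. Write $F' = X \otimes_\Lambda^\bbL -$, which is left adjoint to $G' = \RHom_{\Gamma^{\op}}(X,-)$. Since $\De^{\perf}(\Lambda^{\op})$ is the thick subcategory generated by the free module $\Lambda$ and both $F'$ and $G'F'$ are triangulated, the full subcategory of objects on which the unit $\eta \colon \mathrm{id} \to G'F'$ is an isomorphism is thick. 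Because full faithfulness of $F'$ on $\De^{\perf}(\Lambda^{\op})$ is equivalent to $\eta$ being an isomorphism there, and because $F'(\Lambda) \simeq X$, this reduces the lemma to showing that the canonical morphism
\[
\eta_\Lambda \colon \Lambda \longrightarrow \RHom_{\Gamma^{\op}}(X,X),
\]
induced by the right $\Lambda$-action on $X$, is a quasi-isomorphism.

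First I would record that $DX$ is perfect over $\Gamma$. The standard adjunction gives a natural isomorphism $\RHom_\Lambda(X, D\Lambda) \simeq DX$ of right $\Gamma$-modules, so $DX \simeq G(D\Lambda)$. Now $D\Lambda$ is a finite-dimensional module and hence always lies in $\De^b(\Lambda)$; the hypothesis $\Thick(M) = \De^b(\Lambda)$ is exactly what guarantees that $D\Lambda$ lies in the essential image $\Thick(M)$ of $F$, so that $G$ restricts to the quasi-inverse equivalence $\De^b(\Lambda) \to \De^{\perf}(\Gamma)$ and $DX = G(D\Lambda) \in \De^{\perf}(\Gamma)$. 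This is the one place where the hypothesis enters.

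Next I would compute $\RHom_{\Gamma^{\op}}(X,X)$ by duality. Using $X \simeq DDX$ degreewise and the derived hom–tensor duality over $k$, one obtains natural isomorphisms
\[
\RHom_{\Gamma^{\op}}(X,X) \simeq \RHom_{\Gamma^{\op}}(X, DDX) \simeq D\bigl(DX \otimes_\Gamma^\bbL X\bigr) = D\bigl(F(DX)\bigr).
\]
Since $DX = G(D\Lambda)$ with $D\Lambda \in \De^b(\Lambda)$, the counit of the equivalence yields $F(DX) = FG(D\Lambda) \simeq D\Lambda$, whence $\RHom_{\Gamma^{\op}}(X,X) \simeq DD\Lambda \simeq \Lambda$. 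Tracing these canonical isomorphisms, the unit $\eta_\Lambda$ is identified, via $X \simeq DDX$, with the image under $D$ of the counit $\epsilon_{D\Lambda} \colon FG(D\Lambda) \to D\Lambda$ of the equivalence; as $D\Lambda$ lies in the essential image of $F$, this counit is an isomorphism, and therefore so is $\eta_\Lambda$.

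I expect the main obstacle to be precisely this last identification: checking that the biduality isomorphism constructed above genuinely is the canonical unit $\eta_\Lambda$, rather than merely an abstract isomorphism between objects with the same cohomology. This is a naturality verification that traces the right $\Lambda$-action on $X$ through the hom–tensor adjunction and the counit of the equivalence. The accompanying bookkeeping — keeping the left $\Gamma$- and right $\Lambda$-structures on $X$, $DX$ and their tensor and hom complexes straight, and ensuring that all adjunction and duality isomorphisms are the derived ones, valid for the unbounded complex $X$ — is routine but must be carried out with care.
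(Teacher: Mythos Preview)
Your proposal is correct and takes a genuinely different route from the paper's own argument. The paper does not reduce to a biduality on the generator; instead it factors the functor in question through $\Lambda$-linear transposition. Concretely, it shows that for every $Q \in \De^{\perf}(\Lambda)$ there are natural isomorphisms
\[
X \otimes_\Lambda^{\bbL} \RHom_\Lambda(Q,\Lambda)\;\simeq\;\RHom_\Lambda(Q,X)\;\simeq\;\RHom_\Gamma\bigl(\RHom_\Lambda(X,Q),\Gamma\bigr),
\]
i.e.\ $(X \otimes_\Lambda^{\bbL}-)\circ \Tr_\Lambda \simeq \Tr_\Gamma \circ \RHom_\Lambda(X,-)$, and then reads off full faithfulness from the fact that $\Tr_\Lambda$ is an equivalence on perfect complexes, $\RHom_\Lambda(X,-)$ is fully faithful on $\De^b(\Lambda)\supseteq \De^{\perf}(\Lambda)$ by hypothesis, and $\Tr_\Gamma$ is fully faithful on $\De^{\perf}(\Gamma)$. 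So the paper uses $\RHom_\Lambda(-,\Lambda)$ where you use $\Hom_k(-,k)$. Both invoke the hypothesis $\Thick(M)=\De^b(\Lambda)$ at the analogous point: the paper to place $Q\in\De^{\perf}(\Lambda)$ inside $\Thick(M)$, you to place $D\Lambda$ there and hence conclude $DX\in\De^{\perf}(\Gamma)$.

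The trade-off is exactly what you flag. The paper's functorial factorisation sidesteps the need to identify any particular map: once one has a natural isomorphism to a composite of fully faithful functors, nothing further is required. Your approach is more hands-on and leaves the identification of $\eta_\Lambda$ with $D(\epsilon_{D\Lambda})$ (through the chain $X\simeq DDX$ and hom--tensor duality) as a residual check. That check does go through: unwinding both sides, each sends $\lambda\in\Lambda$ to the functional $\phi\otimes x \mapsto \phi(x\lambda)$ on $DX\otimes_\Gamma^{\bbL} X$, so the two maps agree. With that verification included your argument is complete; without it, it is a correct outline with the one step you yourself identified left to the reader.
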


\begin{proof}
Recall that the transposition functor 
\[
\Tr_\Lambda(-) \coloneq \RHom_\Lambda(-,\Lambda) \colon \De(\Lambda) \rightarrow \De(\Lambda^{\op})^{\op}
\]
induces an equivalence
\[
\De^{\perf} (\Lambda) \rightarrow \De^{\perf}(\Lambda^{\op})^{\op}.
\]
For $Q \in \De^{\perf} (\Lambda)$, we thus have natural isomorphisms 
\begin{align*}
X \otimes_{\Lambda}^{\bbL} \RHom_\Lambda(Q,\Lambda) 
& \xrightarrow{\sim} \RHom_\Lambda(Q,X) \\
& \xrightarrow{\sim} \RHom_\Gamma(\RHom_\Lambda(X,Q),\RHom_\Lambda(X,X)) \\
& \xrightarrow{\sim} \RHom_\Gamma(\RHom_\Lambda(X,Q), \Gamma). 
\end{align*}
To get these isomorphisms, we use for the first that $Q \in \De^{\perf} (\Lambda)$, for the second that 
\[
\RHom_\Lambda(X,-) \colon \Thick (M) = \De^b(\Lambda) \longrightarrow \De^{\perf}(\Gamma)
\]
is fully faithful, and for the third that $\RHom_\Lambda(X,-)$ sends $X$ to $\Gamma$. This yields that we have a natural isomorphism 
\[
(X \otimes_{\Lambda}^{\bbL} -) \circ \Tr_\Lambda \xrightarrow{\sim} \Tr_\Gamma \circ \RHom_\Lambda(X,-) 
\]
of functors $\De^{\perf} (\Lambda) \rightarrow \De(\Gamma^{\op})^{\op}$. Hence,
\[
X \otimes_{\Lambda}^{\bbL} - \colon \De^{\perf} (\Lambda^{\op}) \longrightarrow \De(\Gamma^{\op})
\]
is fully faithful since
\[
\Tr_\Gamma \circ \RHom_\Lambda(X,-) \circ \Tr_{\Lambda}^{-1} \colon \De^{\perf} (\Lambda^{\op})^{\op} \rightarrow \De(\Gamma^{\op})^{\op}
\]
is fully faithful, where we write $\Tr_{\Lambda}^{-1}$ for a quasi-inverse of $\Tr_{\Lambda}$.
\end{proof}

Let $R$ and $S$ be dg-algebras. Following \cite[Section 3.1]{Briggs-Gelinas}, an \mbox{$R$-$S$-dg-bimodule $N$} is called \emph{homologically balanced} if the natural morphisms $R \rightarrow \REnd_S(N)$ and \mbox{$S^{\op} \rightarrow \REnd_{R^{\op}}(N)$} are both quasi-isomorphisms. 

The following result is needed in order to prove \cref{prop: general result}.

\begin{prop}\label{prop: homologically balanced}
If $\Thick (M) = \De^b(\Lambda)$, then the $\Gamma$-$\Lambda$-dg-bimodule $X = pM$ is homologically balanced.
\end{prop}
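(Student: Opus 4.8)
The plan is to verify the two quasi-isomorphism conditions in the definition of homologically balanced separately, namely that the natural morphisms $\Gamma \to \REnd_\Lambda(X)$ and $\Lambda^{\op} \to \REnd_{\Gamma^{\op}}(X)$ are both quasi-isomorphisms. The first of these will turn out to be essentially tautological and independent of any hypothesis, while the second is where the assumption $\Thick(M) = \De^b(\Lambda)$ enters, through the full faithfulness established in \cref{lem: tensor ff 2}.

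For the morphism $\Gamma \to \REnd_\Lambda(X)$, I would use that $X = pM$ is a projective resolution and hence a K-projective dg-module over $\Lambda$. Consequently $\REnd_\Lambda(X)$ is computed by the honest endomorphism dg-algebra $\End_\Lambda(X)$, which is precisely the model for $\Gamma = \REnd_\Lambda(M)$ used in the setup above. Under this identification the natural morphism is the left regular action $\gamma \mapsto (x \mapsto \gamma x)$, which is the identity on $\End_\Lambda(X)$ and thus in particular a quasi-isomorphism. Note that this part does not require the hypothesis on $\Thick(M)$.

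For the morphism $\Lambda^{\op} \to \REnd_{\Gamma^{\op}}(X)$, the plan is to apply the full faithfulness of
\[
X \otimes_\Lambda^{\bbL} - \colon \De^{\perf}(\Lambda^{\op}) \longrightarrow \De(\Gamma^{\op})
\]
from \cref{lem: tensor ff 2} to the object $\Lambda \in \De^{\perf}(\Lambda^{\op})$. Since $X \otimes_\Lambda^{\bbL} \Lambda \simeq X$ as right $\Gamma^{\op}$-modules and $\REnd_{\Lambda^{\op}}(\Lambda) \simeq \Lambda^{\op}$, full faithfulness yields a chain of quasi-isomorphisms
\[
\Lambda^{\op} \xrightarrow{\sim} \REnd_{\Lambda^{\op}}(\Lambda) \xrightarrow{\sim} \REnd_{\Gamma^{\op}}(X \otimes_\Lambda^{\bbL} \Lambda) \simeq \REnd_{\Gamma^{\op}}(X).
\]
It then remains to check that this composite agrees with the natural morphism, which I would do by tracking the functor on endomorphisms: an endomorphism of $\Lambda$ as a right $\Lambda^{\op}$-module is right multiplication $\rho_a$ by some $a \in \Lambda$, and $\mathrm{id}_X \otimes \rho_a$ corresponds under $X \otimes_\Lambda \Lambda \cong X$ to right multiplication by $a$ on $X$, which is exactly the image of $a$ under the natural morphism.

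I expect the main obstacle to lie in the bookkeeping for this last identification: pinning down the op-conventions so that $\REnd_{\Lambda^{\op}}(\Lambda) \simeq \Lambda^{\op}$ rather than $\Lambda$, verifying that $X \otimes_\Lambda^{\bbL} \Lambda \simeq X$ respects the left $\Gamma$-structure used to form $\REnd_{\Gamma^{\op}}(-)$, and confirming that the abstract isomorphism produced by full faithfulness coincides with the natural action morphism rather than merely being some isomorphism. Once these compatibilities are arranged, both conditions hold and $X$ is homologically balanced.
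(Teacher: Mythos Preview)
Your proposal is correct and follows essentially the same route as the paper: the first condition is immediate from the definition of $\Gamma$, and the second is deduced from the full faithfulness of $X \otimes_\Lambda^{\bbL} -$ established in \cref{lem: tensor ff 2}. The only difference is that where you unpack the argument by applying full faithfulness to $\Lambda \in \De^{\perf}(\Lambda^{\op})$ and then worry about matching the resulting isomorphism with the natural action morphism, the paper simply cites \cite[Lemma~4.2]{K94} (see also \cite[Section~3.2]{K03}), which packages exactly this deduction---including the compatibility you flag as the main obstacle---as a general statement.
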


\begin{proof}
As $\Gamma = \REnd_\Lambda(M)$, the first morphism in the definition of a homologically balanced $\Gamma$-$\Lambda$-dg-bimodule is trivially a quasi-isomorphism. It thus remains to show that the natural morphism
\[
\Lambda^{\op} \rightarrow \REnd_{\Gamma^{\op}}(X)
\]
is a quasi-isomorphism. This follows from \cite[Lemma 4.2]{K94}, see also \cite[\mbox{Section 3.2}]{K03}, since the functor 
\[
X \otimes_{\Lambda}^{\bbL} - \colon \De^{\perf} (\Lambda^{\op}) \longrightarrow \De(\Gamma^{\op})
\]
is fully faithful by \cref{lem: tensor ff 2}.
\end{proof}

Recall that we can endow $\Hm^*(\Gamma) \simeq \Ext^*_\Lambda(M,M)$ with the structure of a minimal $A_\infty$-algebra satisfying the conditions in \cref{thm: A_inf}. In the proof of \cref{prop: general result} below, we apply a result from \cite{Briggs-Gelinas} concerning the \emph{$A_\infty$-center} of $\Ext^*_\Lambda(M,M)$.
For the definition of the $A_\infty$-center of a minimal $A_\infty$-algebra, see \cite[\mbox{Definition 3.7}]{Briggs-Gelinas}.

We are now ready to prove \cref{prop: general result}.

\begin{prop} \label{prop: general result}
Let $\Gamma = \REnd_\Lambda(M)$ be formal and assume \mbox{$\Thick (M) = \De^b(\Lambda)$}. Then $\Lambda$ satisfies the \emph{(\textbf{Fg})}-condition if and only if $Z_{\gr}(\Ext^{*}_{\Lambda}(M,M))$ is noetherian and $\Ext^{*}_{\Lambda}(M,M)$ is module finite over $Z_{\gr}(\Ext^{*}_{\Lambda}(M,M))$.
\end{prop}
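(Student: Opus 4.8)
The plan is to combine \cref{prop: equivalent fg}, which reformulates the (\textbf{Fg})-condition in terms of the module $M$ rather than the semisimple quotient $S$, with the $A_\infty$-machinery of Briggs--G\'elinas. Since $\Thick(M) = \De^b(\Lambda)$ by hypothesis, \cref{prop: equivalent fg} tells us that $\Lambda$ satisfies the (\textbf{Fg})-condition if and only if $\HH^*(\Lambda)$ is noetherian and $\Ext^*_\Lambda(M,M)$ is finitely generated as a module over $\HH^*(\Lambda)$, where the module structure is the one induced by the characteristic morphism $\HH^*(\Lambda) \to \Ext^*_\Lambda(M,M)$. The whole argument then reduces to identifying this $\HH^*(\Lambda)$-module structure with the module structure of $\Ext^*_\Lambda(M,M)$ over its graded center.

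First I would record that $\Hm^*(\Gamma) \simeq \Ext^*_\Lambda(M,M)$ carries a minimal $A_\infty$-structure by \cref{thm: A_inf}, and that by \cref{prop: homologically balanced} the bimodule $X = pM$ is homologically balanced, which is exactly the hypothesis needed to apply the relevant result of Briggs--G\'elinas. The key input is their description of the characteristic morphism: it factors through the $A_\infty$-center of $\Ext^*_\Lambda(M,M)$, and homological balance forces the induced map $\HH^*(\Lambda) \to Z_\infty(\Ext^*_\Lambda(M,M))$ to be an isomorphism of graded algebras, compatible with the action on $\Ext^*_\Lambda(M,M)$. Here I write $Z_\infty(\Ext^*_\Lambda(M,M))$ for the $A_\infty$-center in the sense of \cite[Definition 3.7]{Briggs-Gelinas}.

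Next I would use the formality hypothesis to pin down this $A_\infty$-center. Since $\Gamma = \REnd_\Lambda(M)$ is formal, the minimal $A_\infty$-structure on $\Ext^*_\Lambda(M,M)$ may be chosen with $m_d = 0$ for all $d \geq 3$, i.e.\ to be an honest associative graded algebra. Unwinding \cite[Definition 3.7]{Briggs-Gelinas} in this case, the conditions involving the higher products become vacuous and only the $m_2$-condition survives, which is precisely the graded-commutativity condition defining $Z_{\gr}(\Ext^*_\Lambda(M,M))$ (with matching sign convention). Hence $Z_\infty(\Ext^*_\Lambda(M,M)) = Z_{\gr}(\Ext^*_\Lambda(M,M))$, and combined with the isomorphism above this yields a graded-algebra isomorphism $\HH^*(\Lambda) \cong Z_{\gr}(\Ext^*_\Lambda(M,M))$ under which the characteristic morphism corresponds to the inclusion $Z_{\gr}(\Ext^*_\Lambda(M,M)) \hookrightarrow \Ext^*_\Lambda(M,M)$.

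Finally, I would transport the two conditions across this isomorphism: $\HH^*(\Lambda)$ is noetherian if and only if $Z_{\gr}(\Ext^*_\Lambda(M,M))$ is noetherian, and $\Ext^*_\Lambda(M,M)$ is finitely generated over $\HH^*(\Lambda)$ via the characteristic morphism if and only if it is module finite over its graded center via the inclusion. Together with the reformulation from \cref{prop: equivalent fg}, this gives the claimed equivalence. I expect the main obstacle to be the precise application of the Briggs--G\'elinas result, that is, checking that our situation meets the exact hypotheses under which the characteristic morphism is an isomorphism onto the $A_\infty$-center, and verifying that the $A_\infty$-center genuinely collapses to $Z_{\gr}(\Ext^*_\Lambda(M,M))$ with the correct signs in the formal case; the remaining transfer of noetherianity and module finiteness is then purely formal.
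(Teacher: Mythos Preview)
Your overall strategy matches the paper's: use \cref{prop: homologically balanced} to get homological balance, invoke Briggs--G\'elinas to relate the characteristic morphism to the $A_\infty$-center, use formality to identify the latter with $Z_{\gr}(\Ext^*_\Lambda(M,M))$, and finish with \cref{prop: equivalent fg}. However, there is a genuine gap at the key step.

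You assert that homological balance forces the characteristic morphism to induce an \emph{isomorphism} $\HH^*(\Lambda) \xrightarrow{\sim} Z_\infty(\Ext^*_\Lambda(M,M))$. This is not what \cite[Corollary~3.9]{Briggs-Gelinas} gives: it only guarantees that the characteristic morphism \emph{surjects} onto the $A_\infty$-center, i.e.\ that the image of $\chi$ equals $Z_\infty = Z_{\gr}$ in the formal case. In general the kernel of $\chi$ is nonzero (already for classical Koszul algebras one only has $\HH^*(\Lambda)/\mathcal{N} \simeq Z_{\gr}(\Lambda^!)/\mathcal{N}'$ modulo nilpotents), so your final transport step fails in the ``if'' direction: from $Z_{\gr}$ noetherian you cannot conclude directly that $\HH^*(\Lambda)$ is noetherian, because $Z_{\gr}$ is a \emph{quotient} of $\HH^*(\Lambda)$, not isomorphic to it. The ``only if'' direction and the module-finiteness transfer do survive, since the $\HH^*(\Lambda)$-action on $\Ext^*_\Lambda(M,M)$ factors through its image $Z_{\gr}$.

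The paper closes this gap not by claiming an isomorphism but by appealing to the argument pattern of \cite[Theorem~1.3]{Erdmann-Solberg-1}, together with \cref{prop: mf over gr comm subalgebra iff mf over l-Veronese of that subalgebra}. The point is that knowing the image of $\chi$ is $Z_{\gr}$, one reduces (via the $2$-Veronese trick and the standard reformulations of \textbf{(Fg)} in \cite{Solberg'06,EHTSS}) to conditions that only see the image, so the noetherianity of $\HH^*(\Lambda)$ is recovered a posteriori rather than read off from an isomorphism. You correctly flagged the Briggs--G\'elinas application as the main obstacle; the resolution is to work with the surjection and import the Erdmann--Solberg argument, not to upgrade it to an isomorphism.
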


\begin{proof} 
Since $\Thick (M) = \De^b(\Lambda)$, we know from \cref{prop: homologically balanced} that the $\Gamma$-$\Lambda$-dg-bimodule $X = pM$ is homologically balanced. Hence, the characteristic morphism 
\[
\HH^{*}(\Lambda) \rightarrow \Ext_\Lambda^{*}(M,M)
\]
surjects onto the $A_\infty$-center of $\Ext_\Lambda^{*}(M,M) \simeq \Hm^*(\Gamma)$ by \cite[Corollary 3.9]{Briggs-Gelinas}. Since $\Gamma$ is formal, the $A_{\infty}$-center coincides with the graded center $Z_{\gr}(\Ext_\Lambda^{*}(M,M))$, as noted e.g.\ on the top of page 29 of \cite{Briggs-Gelinas}. Using this together with \cref{prop: equivalent fg} and \cref{prop: mf over gr comm subalgebra iff mf over l-Veronese of that subalgebra}, the claim now follows by analogue arguments as those used to show \cite[Theorem 1.3]{Erdmann-Solberg-1}.
\end{proof}

\section{The (\textbf{Fg})-condition for higher Koszul algebras} \label{sec: higher Koszul}

In this section we investigate when an $n$-$T$-Koszul algebra satisfies the (\textbf{Fg})-condition and connect this to the theory of higher representation infinite algebras. Throughout the rest of the paper, we always let $\Lambda = \oplus_{i \geq 0} \Lambda_i$ be positively graded, where $\Lambda_0$ is a finite dimensional basic algebra. We assume that $\Lambda$ is locally finite dimensional, meaning that $\Lambda_i$ is finite dimensional as a vector space for each $i \geq 0$. Note that $\Lambda_0$ is assumed to be basic for consistency with \cite{HS,Madsen}; see \cref{rem: basic}.

We start by recalling relevant definitions related to $n$-$T$-Koszul algebras in \cref{subsec: background n-T}, before showing that the dual of an $n$-$T$-Koszul algebra is the cohomology of a formal $A_\infty$-algebra in \cref{subsec: formality}. In \cref{subsec: n-T and fg} we combine this with the results in \cref{sec: general results} to characterize when an $n$-$T$-Koszul algebra satisfies the \mbox{(\textbf{Fg})-condition} and prove \cref{thm: intro1} from the introduction. We next specialize to the case of a graded symmetric $n$-$T$-Koszul algebra of highest degree $1$, where we establish a strong connection between the (\textbf{Fg})-condition and higher representation infinite algebras, leading to \cref{thm: intro2}.

\subsection{Background on $n$-$T$-Koszul algebras} \label{subsec: background n-T}

In this subsection we provide an overview of definitions from \cite{HS} that are used in the rest of the paper. Recall that $n$ denotes a positive integer, and note that the definitions presented here recover notions from \cite{Madsen} in the case $n=1$. 

We first recall what it means for a module to be graded $n\mathbb{Z}$-orthogonal. A non-zero graded $\Lambda$-module $M = \oplus_{i\in\mathbb{Z}}M_i$ is \emph{concentrated in degree $0$} if \mbox{$M_i  = 0$} for \mbox{$i \neq 0$.}

\begin{defin}
	Let $T$ be a finitely generated basic graded $\Lambda$-module concentrated in degree $0$. We say that $T$ is \emph{graded $n\mathbb{Z}$-orthogonal} if 
	\[\Ext^{i}_{\gr \Lambda}(T,T\langle j \rangle) = 0 \]
	for $i \neq nj$.
\end{defin}

\begin{remark}\label{rem: basic}
    A graded $n \mathbb{Z}$-orthogonal module is assumed to be basic for consistency with \cite{HS, Madsen}. We note that the proofs of the results in \cref{sec: higher Koszul} do not rely on this assumption; see \cite[Remark 3.6]{HS}.
\end{remark}

We are now ready to define higher Koszul algebras, or $n$-$T$-Koszul algebras. For the definition of a tilting module, see e.g.\ \cite{M86}. 

\begin{defin}
	Assume $\gldim \Lambda_0 < \infty$ and let $T$ be a graded $\Lambda$-module concentrated in degree $0$. We say that $\Lambda$ is \emph{$n$-$T$-Koszul} or \emph{$n$-Koszul with respect to $T$} if the following conditions hold:
	\begin{enumerate}
		\item $T$ is a tilting $\Lambda_0$-module.
		\item $T$ is graded $n\mathbb{Z}$-orthogonal as a $\Lambda$-module.
	\end{enumerate}
\end{defin}

Given an $n$-$T$-Koszul algebra, we can associate a version of the Koszul dual. This $n$-$T$-Koszul dual plays a crucial role in the rest of the paper.

\begin{defin}\label{def: n-T-Koszul dual}
Let $\Lambda$ be an $n$-$T$-Koszul algebra. The \emph{$n$-$T$-Koszul dual of $\Lambda$} is given by $\Lambda^! \coloneq  \oplus_{i \geq 0} \Ext_{\gr \Lambda}^{ni}(T,T\langle i \rangle)$.
\end{defin}

It should be noted that even though the notation for the $n$-$T$-Koszul dual is potentially ambiguous, it will for us always be clear from context which $n$-$T$-Koszul structure the dual is computed with respect to.

\subsection{$n$-$T$-Koszul algebras and formality}\label{subsec: formality}

In this subsection we show that the \mbox{$n$-$T$-Koszul} dual of an $n$-$T$-Koszul algebra $\Lambda$ is the cohomology of a formal $A_\infty$-algebra. More precisely, we demonstrate that $\Lambda^!$ is isomorphic to the cohomology of $\Gamma = \REnd_{\Lambda}(T)$ in \cref{prop: Koszul dual as Ext}, before showing that $\Gamma$ is a formal \mbox{$A_\infty$-algebra} in \cref{thm: A_infty-structure on Koszul dual}. Note that we think of $\Lambda^!$ as a graded algebra given by putting $\Ext_{\gr \Lambda}^{ni}(T,T\langle i \rangle)$ in degree $ni$ and having zero in all degrees not divisible by $n$. Since $\Lambda$ is a graded algebra, there is also induced a second grading on $\Lambda^!$ which is often referred to as an \emph{internal grading} or \emph{Adams grading}. Note that this internal grading is compatible with the cohomological grading in the sense that $\Lambda^!$ is bigraded, i.e.\ graded over $\mathbb{Z}\times\mathbb{Z}$. The internal grading will play a key role in the proof of \cref{thm: A_infty-structure on Koszul dual}.

\begin{prop}\label{prop: Koszul dual as Ext}
    Let $T$ be a graded $n\mathbb{Z}$-orthogonal $\Lambda$-module and set $\Gamma = \REnd_{\Lambda}(T)$. The following statements hold:
    \begin{enumerate}
        \item The grading on $\Lambda$ induces an internal grading on $\Hm^*(\Gamma)$ which is compatible with the cohomological grading and is given by $\Hm^{*}(\Gamma)_j \simeq \oplus_{i \in \mathbb{Z}} \Hm^{i}(\Gamma)_j$ with $\Hm^{i}(\Gamma)_j \simeq \Ext_{\gr \Lambda}^{i}(T,T\langle j \rangle) = \Ext_{\gr \Lambda}^{nj}(T,T\langle j \rangle)$ for $i=nj$ and $0$ otherwise.
        \item If $\Lambda$ is $n$-$T$-Koszul, then we have isomorphisms of graded algebras
        \[
        \Lambda^! \simeq \Ext_{\Lambda}^{*}(T,T) \simeq \Hm^*(\Gamma).
        \]
    \end{enumerate}
\end{prop}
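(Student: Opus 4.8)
The plan is to prove Part (1) first, equipping $\Hm^*(\Gamma)$ with the desired internal grading, and then to obtain Part (2) as a bookkeeping consequence together with the tautological identification of $\Hm^*(\Gamma)$ with $\Ext^*_\Lambda(T,T)$. For Part (1), I would begin by fixing the projective resolution used to build $\Gamma$ to be a \emph{graded} projective resolution $X$ of $T$ with finitely generated terms, which exists since $T$ is finitely generated and $\Lambda$ is locally finite dimensional. Because $X$ consists of graded modules, each Hom-space splits according to internal degree,
\[
\Hom_\Lambda(X^m, X^{m+i}) \simeq \bigoplus_{j} \Hom_{\gr\Lambda}(X^m, X^{m+i}\langle j \rangle),
\]
the finite generation of $X^m$ guaranteeing that every homomorphism is a finite sum of internally homogeneous components. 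This induces an internal grading on $\Gamma$, and the two key checks are that the differential $d(f) = d_X \circ f - (-1)^i f \circ d_X$ and the composition product are homogeneous of internal degree $0$; both follow because $d_X$ is a degree-$0$ map of graded modules and composition adds internal degrees.

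Passing to cohomology, the internal-degree-$j$ part of $\Gamma$ computes $\RHom_{\gr\Lambda}(T, T\langle j \rangle)$, so that $\Hm^i(\Gamma)_j \simeq \Ext^i_{\gr\Lambda}(T, T\langle j \rangle)$, and the total cohomology decomposes as $\Hm^i(\Gamma) \simeq \bigoplus_j \Ext^i_{\gr\Lambda}(T, T\langle j \rangle)$. The graded $n\mathbb{Z}$-orthogonality hypothesis then forces every summand with $i \neq nj$ to vanish, leaving exactly the claimed formula $\Hm^i(\Gamma)_j \simeq \Ext^{nj}_{\gr\Lambda}(T, T\langle j\rangle)$ for $i = nj$ and $0$ otherwise.

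For Part (2), the isomorphism $\Ext^*_\Lambda(T,T) \simeq \Hm^*(\Gamma)$ is immediate from $\Gamma = \REnd_\Lambda(T)$, with the Yoneda product corresponding to the multiplication $m_2$ induced on cohomology. To identify $\Lambda^!$ with $\Ext^*_\Lambda(T,T)$, I would use the internal decomposition $\Ext^i_\Lambda(T,T) \simeq \bigoplus_j \Ext^i_{\gr\Lambda}(T, T\langle j \rangle)$ from Part (1): orthogonality collapses the right-hand side to the single summand with $i = nj$, and combining $i = nj$ with $i \geq 0$ shows that only $j \geq 0$ contributes. Summing over $i$ yields $\Ext^*_\Lambda(T,T) \simeq \bigoplus_{j \geq 0} \Ext^{nj}_{\gr\Lambda}(T, T\langle j \rangle) = \Lambda^!$, with the degree-$ni$ part of $\Lambda^!$ matching $\Ext^{ni}_\Lambda(T,T)$. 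A final check that the Yoneda product restricts compatibly to the graded pieces makes this an isomorphism of graded algebras. Note that only $n\mathbb{Z}$-orthogonality is genuinely used here, the full $n$-$T$-Koszul hypothesis entering only because $\Lambda^!$ is defined under it.

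The main obstacle I anticipate is the careful treatment of the internal grading in Part (1). Since $\Gamma^i = \prod_{m} \Hom_\Lambda(X^m, X^{m+i})$ is an infinite product of \emph{ungraded} Hom-spaces, the internal grading at the chain level is a completed grading (a product over internal degrees) rather than a genuine direct sum, so the per-degree splitting of the Hom-spaces does not literally exhibit $\Gamma^i$ as $\bigoplus_j (\Gamma^i)_j$. To guarantee that the decomposition of the cohomology is nonetheless a genuine direct sum, I would compare $\Gamma$ with the quasi-isomorphic complex $\Hom_\Lambda(X, T)$ computing the same Ext groups; there the fact that $T$ is finitely generated and concentrated in degree $0$ makes the internal splitting pointwise finite, hence a genuine direct sum of subcomplexes $\Hom_{\gr\Lambda}(X, T\langle j \rangle)$. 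This is precisely where finite generation of $T$ is essential, and once this bookkeeping is settled, the orthogonality argument and the comparison of products reduce to routine verifications.
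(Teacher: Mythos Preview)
Your proposal is correct and follows essentially the same approach as the paper. The paper's proof is terser: it cites \cite[Proposition 3.1.2]{Madsen} for the decomposition $\Ext^i_\Lambda(T,T) \simeq \prod_{j \in \mathbb{Z}}\Ext^i_{\gr \Lambda}(T,T\langle j \rangle)$ (note the product, exactly the completion issue you flagged), observes that $n\mathbb{Z}$-orthogonality collapses this product to the single term with $i = nj$, and then reads off both parts; your version unpacks this same argument directly and handles the product-versus-sum subtlety by passing to $\Hom_\Lambda(X,T)$, which is a legitimate and slightly more self-contained way to reach the same conclusion.
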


\begin{proof}
As demonstrated in the proof of \cite[Proposition 3.1.2]{Madsen}, we have
\[
\Ext^i_\Lambda(T,T) \simeq \prod_{j \in \mathbb{Z}}\Ext^i_{\gr \Lambda}(T,T \langle j \rangle)  
\]
for all $i \geq 0$. As $T$ is graded $n\mathbb{Z}$-orthogonal, this product equals $\Ext_{\gr \Lambda}^{nj}(T,T\langle j \rangle)$ for $i=nj$ and is $0$ if $i$ is not divisible by $n$. The claims in \emph{(1)} now follow by noting that $\Ext_{\Lambda}^{*}(T,T) \simeq \Hm^*(\Gamma)$ as the augmentation map $pT \rightarrow T$ is a quasi-isomorphism. Since we have
\[
\Ext_{\Lambda}^{*}(T,T) \simeq \bigoplus_{i \geq 0} \Ext_{\gr \Lambda}^{ni}(T,T\langle i \rangle), 
\]
part \emph{(2)} is deduced by combining the arguments above with the definition of the $n$-$T$-Koszul dual $\Lambda^!$.
\end{proof}

Recall from \cref{thm: A_inf} that we can endow the $n$-$T$-Koszul dual 
\[
\Lambda^! \simeq \Ext_{\Lambda}^{*}(T,T) \simeq \Hm^*(\Gamma)
\] 
with an $A_\infty$-structure in such a way that $m_1=0$, $m_2$ is the usual multiplication and $\Gamma = \REnd_{\Lambda}(T)$ and $\Hm^*(\Gamma)$ are quasi-isomorphic as $A_\infty$-algebras. Our next result demonstrates that this $A_\infty$-structure can be chosen such that $m_d = 0$ for $d \geq 3$. To see this, we employ a small variation on a standard trick of using an internal grading to show that an $A_\infty$-algebra is formal.

\begin{thm} \label{thm: A_infty-structure on Koszul dual}
Let $T$ be a graded $n\mathbb{Z}$-orthogonal $\Lambda$-module. 
Then $\Gamma = \REnd_\Lambda(T)$ is a formal $A_{\infty}$-algebra. In particular, this holds if $\Lambda$ is an $n$-$T$-Koszul algebra.
\end{thm}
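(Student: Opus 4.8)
The plan is to exploit the second, internal grading on $\Hm^*(\Gamma)$ recorded in \cref{prop: Koszul dual as Ext}, together with the observation that the nonzero bigraded components of $\Hm^*(\Gamma)$ are extremely sparse, being concentrated on a single line. Concretely, by \cref{prop: Koszul dual as Ext}\emph{(1)} the bigraded piece $\Hm^i(\Gamma)_j$ vanishes unless $i = nj$, so every nonzero homogeneous element of $\Hm^*(\Gamma)$ has cohomological degree exactly $n$ times its internal degree. The idea is to equip the minimal model of $\Gamma$ furnished by \cref{thm: A_inf} with this internal grading, arrange that every higher product $m_d$ is homogeneous of internal degree $0$, and then run a degree count forcing $m_d = 0$ for all $d \neq 2$.

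First I would make the internal grading visible already at the chain level. Choosing a graded projective resolution $pT$ of $T$ in $\gr \Lambda$, the dg-algebra $\Gamma \simeq \End_\Lambda^\bullet(pT)$ decomposes according to the internal degree by which a homomorphism shifts the grading, so that $\Gamma$ becomes a dg-algebra in the category of complexes of internally graded vector spaces. Because $T$ is concentrated in degree $0$ and composition of homomorphisms adds internal degrees, both the differential $m_1^\Gamma$ and the multiplication $m_2^\Gamma$ are homogeneous of internal degree $0$; in other words, $\Gamma$ is a dg-algebra object in the symmetric monoidal abelian category of internally bigraded $k$-vector spaces.

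The key step is then to observe that Kadeishvili's construction in \cref{thm: A_inf} can be carried out internally to this bigraded category. Since all the data used in the transfer---a splitting of $\Gamma$ into its cohomology plus a contractible complement, together with the chosen contracting homotopy---can be taken homogeneous of internal degree $0$ because the differential is, the resulting operations $m_d$ on $\Hm^*(\Gamma)$ are themselves internally homogeneous, sending a tuple of internal degrees $(j_1,\dots,j_d)$ to an element of internal degree $j_1 + \cdots + j_d$. I expect verifying this compatibility to be the main obstacle: one must check that the inductive choices in the minimal model construction respect the internal grading, which is precisely where the ``small variation'' on the standard formality argument lies.

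Granting this, formality follows from a direct degree count. Take homogeneous elements $x_\ell \in \Hm^{i_\ell}(\Gamma)_{j_\ell}$, so that $i_\ell = n j_\ell$ by \cref{prop: Koszul dual as Ext}\emph{(1)}. The element $m_d(x_1,\dots,x_d)$ has cohomological degree $\big(\sum_\ell i_\ell\big) + 2 - d$ and internal degree $\sum_\ell j_\ell$. By \cref{prop: Koszul dual as Ext}\emph{(1)} this element can be nonzero only if its cohomological degree equals $n$ times its internal degree, that is, only if $n\sum_\ell j_\ell + 2 - d = n \sum_\ell j_\ell$, which forces $d = 2$. Hence $m_d = 0$ for every $d \neq 2$, and in particular for $d \geq 3$, so the $A_\infty$-structure on $\Hm^*(\Gamma)$ may be taken to be that of an associative graded algebra and $\Gamma$ is formal. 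The final assertion of the theorem is then immediate, since an $n$-$T$-Koszul algebra has $T$ graded $n\mathbb{Z}$-orthogonal by definition.
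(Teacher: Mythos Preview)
Your proposal is correct and follows essentially the same approach as the paper: both arguments arrange that the $A_\infty$-operations $m_d$ on $\Hm^*(\Gamma)$ are homogeneous of internal degree $0$, and then use the concentration of $\Hm^i(\Gamma)_j$ on the line $i = nj$ from \cref{prop: Koszul dual as Ext}\emph{(1)} to run the identical degree count forcing $d = 2$. The only difference is that where you sketch why Kadeishvili's transfer respects the internal grading (by making the bigrading visible at the chain level via a graded projective resolution), the paper simply cites \cite{Merkulov} and \cite[Section 2]{Ainfty-structure-on-ext-algebras} for this fact.
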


\begin{proof}
We begin by noting that the $A_\infty$-structure on $\Hm^*(\Gamma)$ as in \cref{thm: A_inf} can be chosen such that the maps $m_d$ are homogeneous of \mbox{degree $0$} with respect to the internal grading of $\Hm^*(\Gamma)$ described in \cref{prop: Koszul dual as Ext} \emph{(1)}. In other words, the maps $m_d$ can be chosen to be homogeneous of bidegree $(2 - d, 0)$, where the first coordinate indicates the cohomological grading and the second the internal grading. To check this, one could consult the construction of the structure in \cref{thm: A_inf}, e.g.\ in \cite{Merkulov}. See also \cite[Section 2]{Ainfty-structure-on-ext-algebras} for an explicit proof in the case where $\Gamma$ is a dg-algebra, in particular the remark after \cite[Proposition 2.3]{Ainfty-structure-on-ext-algebras}. 

By part \emph{(1)} of \cref{prop: Koszul dual as Ext}, we know that $\Hm^{i}(\Gamma)_j \simeq \Ext_{\gr \Lambda}^{i}(T,T\langle j \rangle)$, which equals $\Ext_{\gr \Lambda}^{nj}(T,T\langle j \rangle)$ if $i=nj$ and is $0$ when $i$ is not divisible by $n$. Consider now a non-zero element $a_1 \otimes a_2 \otimes \cdots \otimes a_d \in \Hm^*(\Gamma)^{\otimes d}$ that is homogeneous in each grading, and let the internal degree of $a_i$ be denoted by $\lvert a_i \rvert$. As this element is of bidegree $(n\Sigma_{i} \lvert a_i \rvert, \Sigma_{i} \lvert a_i \rvert)$, it follows that the bidegree of $m_d(a_1 \otimes a_2 \otimes \cdots \otimes a_d)$ is $(n\Sigma_{i} \lvert a_i \rvert + 2 - d, \Sigma_{i} \lvert a_i \rvert)$. This implies that $m_d(a_1 \otimes a_2 \otimes \cdots \otimes a_d) = 0$ unless $d = 2$, since otherwise its cohomological degree does not equal $n$ times its internal degree, and we can conclude that $\Gamma$ is formal. 
\end{proof}

\subsection{$n$-$T$-Koszul algebras and the (\textbf{Fg})-condition}\label{subsec: n-T and fg}

We are now ready to prove \cref{thm: intro1} from the introduction, generalizing \cite[Theorem 1.3]{Erdmann-Solberg-1} to the significantly bigger class of $n$-$T$-Koszul algebras. 

\begin{thm}\label{thm: characterisation for n-T}
Let $\Lambda$ be a finite dimensional $n$-$T$-Koszul algebra. Then $\Lambda$ satisfies the \emph{(\textbf{Fg})}-condition if and only if $Z_{\gr}(\Lambda^!)$ is noetherian and $\Lambda^!$ is module finite over $Z_{\gr}(\Lambda^!)$.
\end{thm}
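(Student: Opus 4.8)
The plan is to deduce the theorem from \cref{prop: general result} applied with $M = T$. By \cref{prop: Koszul dual as Ext} \emph{(2)} there is an isomorphism of graded algebras $\Lambda^! \simeq \Ext^*_\Lambda(T,T)$, and the graded center $Z_{\gr}(-)$ together with module-finiteness over it is preserved under such an isomorphism. Hence, once \cref{prop: general result} is available for $M=T$, rewriting $\Ext^*_\Lambda(T,T)$ as $\Lambda^!$ will give exactly the asserted equivalence. This reduces everything to checking the two hypotheses of \cref{prop: general result}: that $\Gamma = \REnd_\Lambda(T)$ is formal, and that $\Thick(T) = \De^b(\Lambda)$.

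The first hypothesis is immediate: since $T$ is graded $n\mathbb{Z}$-orthogonal, \cref{thm: A_infty-structure on Koszul dual} tells us that $\Gamma = \REnd_\Lambda(T)$ is a formal $A_\infty$-algebra. The genuine content, and what I expect to be the main obstacle, is the generation statement $\Thick(T) = \De^b(\Lambda)$, where $T$ is a priori only known to be a tilting module over the degree-$0$ part $\Lambda_0$.

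To establish this, I would first work over $\Lambda_0$ and then transport the conclusion to $\Lambda$. Because $T$ is a tilting $\Lambda_0$-module and $\gldim \Lambda_0 < \infty$, a finite $\addu T$-coresolution of $\Lambda_0$ places $\Lambda_0$ in $\Thick_{\Lambda_0}(T)$, whence $\Thick_{\Lambda_0}(T) \supseteq \Thick_{\Lambda_0}(\Lambda_0) = \De^{\perf}(\Lambda_0) = \De^b(\Lambda_0)$; the reverse inclusion is clear, so $\Thick_{\Lambda_0}(T) = \De^b(\Lambda_0)$. To move from $\De^b(\Lambda_0)$ to $\De^b(\Lambda)$, I would use the algebra surjection $\pi\colon \Lambda \to \Lambda/\Lambda_{>0} \simeq \Lambda_0$. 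Restriction of scalars along $\pi$ is exact and sends a $\Lambda_0$-module to the associated $\Lambda$-module concentrated in degree $0$; in particular it fixes $T$ as well as the semisimple module $S = \Lambda/\rad\Lambda$, which coincides with (the inflation of) $\Lambda_0/\rad\Lambda_0$ since $\rad\Lambda \supseteq \Lambda_{>0}$. Being a triangulated functor, it carries $\Thick_{\Lambda_0}(T)$ into $\Thick_\Lambda(T)$, so from $S \in \De^b(\Lambda_0) = \Thick_{\Lambda_0}(T)$ we obtain $S \in \Thick_\Lambda(T)$. Finally, as $\Lambda$ is finite dimensional we have $\De^b(\Lambda) = \Thick_\Lambda(S)$, and combining this with $\Thick_\Lambda(T) \subseteq \De^b(\Lambda)$ yields $\Thick_\Lambda(T) = \De^b(\Lambda)$.

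With both hypotheses verified, \cref{prop: general result} shows that $\Lambda$ satisfies the \emph{(}\textbf{Fg}\emph{)}-condition if and only if $Z_{\gr}(\Ext^*_\Lambda(T,T))$ is noetherian and $\Ext^*_\Lambda(T,T)$ is module finite over it; passing through the graded algebra isomorphism $\Lambda^! \simeq \Ext^*_\Lambda(T,T)$ then finishes the proof. The delicate point to get right is precisely the passage between $\De^b(\Lambda_0)$ and $\De^b(\Lambda)$: one must confirm that restriction along $\pi$ is genuinely triangulated and that $S$ really is the inflation of the simple $\Lambda_0$-modules, so that the finite-dimensionality of $\Lambda$ can be invoked to conclude $\Thick_\Lambda(S) = \De^b(\Lambda)$.
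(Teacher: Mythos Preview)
Your proposal is correct and follows essentially the same route as the paper: formality from \cref{thm: A_infty-structure on Koszul dual}, the identification $\Lambda^! \simeq \Ext^*_\Lambda(T,T)$ from \cref{prop: Koszul dual as Ext}, and then \cref{prop: general result}. The only cosmetic difference is that the paper establishes the generation statement via the chain $\Thick(T) = \Thick(\Lambda_0) = \Thick(\Lambda/\rad\Lambda) = \De^b(\Lambda)$ computed directly in $\De^b(\Lambda)$, whereas you first prove $\Thick_{\Lambda_0}(T) = \De^b(\Lambda_0)$ and then push forward along restriction of scalars; these are the same argument in different packaging.
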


\begin{proof}
By \cref{thm: A_infty-structure on Koszul dual}, we know that $\Gamma = \REnd_\Lambda(T)$ is a formal $A_{\infty}$-algebra. Note that 
\[
\Thick(T) = \Thick(\Lambda_0)= \Thick(\Lambda/\rad \Lambda) = \De^b(\Lambda),
\]
where the first equality follows from $T$ being a tilting $\Lambda_0$-module. For the second equality, one uses that $\Lambda_0$ has finite global dimension, while the third holds since $\Lambda$ is finite dimensional. The desired conclusion now follows from \cref{prop: general result}, as $\Lambda^! \simeq \Ext_{\Lambda}^{*}(T,T)$ by part \emph{(2)} of \cref{prop: Koszul dual as Ext}.
\end{proof}

Our next aim is to employ the theory from \cite{HS} to establish a connection between the (\textbf{Fg})-condition and the theory of higher representation infinite algebras as introduced in \cite{HIO14}. For this, it is useful to restrict our attention to graded symmetric $n$-$T$-Koszul algebras of highest degree $1$. In particular, we will characterize when such an \mbox{$n$-$T$-Koszul} algebra $\Lambda$ satisfies the (\textbf{Fg})-condition in terms of the endomorphism algebra \mbox{$B \coloneq \End_{\gr \Lambda}(T)$} being $n$-representation tame. This is done in \cref{thm: Fg iff n-rep tame}. Recall that a positively graded algebra $\Lambda = \oplus_{i \geq 0} \Lambda_i$ of highest degree $a$ is called \emph{graded symmetric} if $\Lambda \langle -a \rangle \simeq D \Lambda$ as graded $\Lambda$-bimodules. Note in particular that any graded symmetric algebra is self-injective.

\begin{remark}
If one wants to consider our theory for a graded symmetric \mbox{algebra $\Lambda$} of highest degree $a \geq 1$, then one can look at the $a$-th quasi-Veronese $\Lambda^{[a]}$ of $\Lambda$ as \mbox{in \cite{MM11}}. Note that $\Lambda^{[a]}$ can also be defined as the covering (or smash product) of $\Lambda$ induced by the $\mathbb{Z}/a\mathbb{Z}$-grading that $\Lambda$ necessarily has by virtue of being positively graded of highest \mbox{degree $a$}; see e.g.\ \cite{BG,CM06}. By \cite[Theorem 4.1]{B23} or \cite[\mbox{Proposition 2.1}]{Sandoey24}, we know that the (\textbf{Fg})-condition holds for $\Lambda$ if and only if it does for $\Lambda^{[a]}$, provided that the characteristic of the field $k$ satisfies a reasonable condition. Since $\Lambda^{[a]}$ is graded symmetric of highest degree $1$, little is hence lost by restricting to this case. 
\end{remark}

We start by recalling necessary terminology related to higher representation infinite algebras. Let $A$ denote a finite dimensional algebra. Recall that if $A$ has finite global dimension, then $\De^b(A)$ has a Serre functor given by the derived Nakayama functor \mbox{$\nu(-) \coloneq - \otimes^{\mathbf{L}}_{A} DA$}. Using the notation \mbox{$\nu_n \coloneq \nu(-)[-n]$}, the algebra $A$ is called \mbox{\emph{$n$-representation infinite}} if $\gldim A \leq n$ and \mbox{$\Hm^{i}(\nu^{-j}_n(A)) = 0$} for \mbox{$i \neq 0$} and \mbox{$j \geq 0$} \cite[Definition 2.7]{HIO14}. 

Given an $n$-representation infinite algebra $A$, we let the \emph{$(n+1)$-preprojective algebra of $A$} be denoted by $\Pi_{n+1}A$. Recall from \cite[Lemma 2.13]{IO13} that
\[
\Pi_{n+1}A \simeq \bigoplus_{i \geq 0}\Hom_{\De^b(A)}(A, \nu^{-i}_{n}(A)).
\]
An $n$-representation infinite algebra $A$ is called $n$-\emph{representation tame} if $\Pi_{n+1}A$ is a noetherian algebra over its center, i.e.\ if the center $Z \coloneq Z(\Pi_{n+1}A)$ is noetherian and $\Pi_{n+1}A$ is module finite \mbox{over $Z$} \cite[Definition 6.10]{HIO14}. We have that $Z$ is a graded algebra by \cref{prop: center of graded algebra is graded}, which in particular allows us to consider \mbox{$\ell$-Veronese} subalgebras of $Z$.

Note that the notion of a $1$-representation infinite algebra coincides with the classical notion of a representation infinite hereditary algebra. As one might expect, such an algebra is $1$-representation tame if and only if it is tame in the classical sense. One direction of this statement is pointed out in \cite[\mbox{Example 6.11 (a)}]{HIO14} in the case where the field $k$ is assumed to be of characteristic zero. As we want to work with a field of arbitrary characteristic, we need the following result.

\begin{prop}\label{prop: tame iff 1-tame}
    Let $A$ be a representation infinite hereditary algebra. Then $A$ is tame if and only if it is $1$-representation tame.
\end{prop}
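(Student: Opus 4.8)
The plan is to reduce everything to the underlying graph of the quiver of $A$ and to separate the tame (Euclidean) from the wild case by the growth of the graded preprojective algebra. Since $k$ is algebraically closed, $A \simeq kQ$ for an acyclic quiver $Q$, and by the Dlab--Ringel classification the representation-infinite algebra $A$ is tame precisely when the underlying graph of $Q$ is extended Dynkin (Euclidean), and wild otherwise; crucially, this dichotomy depends only on $Q$ and not on $\operatorname{char} k$. The second characteristic-independent input I would record is the graded dimension of $\Pi \coloneq \Pi_2 A$. Since $A$ is projective, $\Hom_{\De^b(A)}(A, \nu_1^{-i}(A)) = \Hom_A(A, \tau^{-i}A)$ has dimension equal to the total $k$-dimension of $\tau^{-i}A$, so $\dim_k \Pi_i = \dim_k \tau^{-i}A$, and $\dimv \tau^{-i}A$ is obtained by applying the $i$-th power of the inverse Coxeter transformation $\Phi^{-1}$ to $\dimv A$. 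As $\Phi$ is an integer matrix determined by $Q$, the spectral radius of $\Phi^{-1}$ --- equal to $1$ in the Euclidean case and strictly greater than $1$ in the wild case --- is again characteristic-independent.

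For the implication that $1$-representation tameness forces tameness, I would argue by contraposition. If $A$ is wild, then the spectral radius of $\Phi^{-1}$ exceeds $1$, which forces $\dim_k \Pi_i$ to grow exponentially in $i$, so that $\Pi$ has infinite Gelfand--Kirillov dimension. On the other hand, if $\Pi$ were module finite over its center $Z$, then a central version of the Artin--Tate lemma (using that $\Pi$ is an affine $k$-algebra) would show that $Z$ is itself a finitely generated commutative $k$-algebra; consequently $Z$ would have finite Krull dimension and $\Pi$, being a finite $Z$-module, would share its finite Gelfand--Kirillov dimension --- a contradiction. Hence a wild algebra cannot be $1$-representation tame, which is the desired contrapositive. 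This half of the argument is fully characteristic-free.

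The forward direction, that tameness implies $1$-representation tameness, is where the real work lies and is the step I expect to be the main obstacle. In characteristic zero this is exactly \cite[Example 6.11 (a)]{HIO14}, resting on the McKay identification $\Pi \simeq k[x,y] \rtimes G$ of the preprojective algebra of an extended Dynkin quiver with a skew group algebra for a finite $G < \mathrm{SL}_2(k)$, whose center is the Kleinian singularity $k[x,y]^G$ and over which $\Pi$ is visibly module finite. This identification degenerates precisely when $\operatorname{char} k$ divides $\lvert G \rvert$, so the characteristic-free statement cannot be read off directly. My plan to circumvent this is to pass to an integral model: the preprojective algebra is defined over $\mathbb{Z}$ by the doubled quiver and the mesh relation, and the central generators cutting out $k[x,y]^G$ in the characteristic-zero picture are likewise defined integrally. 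Since base change preserves both centrality and module generation, $\Pi$ remains module finite over the image $Z_0$ of this central subalgebra in every characteristic; the center $Z(\Pi)$ is then a $Z_0$-submodule of the Noetherian $Z_0$-module $\Pi$, hence finitely generated and Noetherian, and $\Pi$ is a fortiori module finite over $Z(\Pi)$.

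I expect the delicate point in this last argument to be the behaviour at the finitely many bad primes dividing $\lvert G \rvert$, where the relationship between $\Pi$ and the Kleinian singularity is least transparent and where one must verify that the integral central subalgebra still has a two-dimensional image and that no collapse of graded dimensions occurs. Because the Hilbert series of $\Pi$ is characteristic-independent (the dimension vectors of preprojective modules do not depend on $\operatorname{char} k$), I would control this via a graded Nakayama argument: module finiteness over $Z_0$ reduces to finite dimensionality of the quotient $\Pi / Z_0^{+}\Pi$, a condition on graded dimensions that is insensitive to the characteristic. Assembling the two implications then yields the stated equivalence.
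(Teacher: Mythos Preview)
Your approach diverges from the paper's on both halves. For the wild case, the paper simply cites Crawley-Boevey--Etingof--Ginzburg (extended to arbitrary characteristic by Schedler) for the fact that $Z(\Pi_2 A) \simeq k$ when $A$ is wild; since $\Pi_2 A$ is infinite dimensional, module-finiteness over $k$ fails at once. Your spectral-radius/GK-dimension argument via Artin--Tate is a valid and more self-contained alternative, trading a deep structural citation for elementary growth estimates. For the tame direction, the paper bypasses McKay and base change entirely and instead invokes the explicit case-by-case Gr\"obner computations of Erdmann--Solberg for each Euclidean type $\widetilde{\mathbb{A}}_n,\widetilde{\mathbb{D}}_n,\widetilde{\mathbb{E}}_{6,7,8}$, which exhibit $E(\Lambda)^{\op}\simeq\Pi_2 A$ (for suitable parameters) as module finite over a noetherian graded center, uniformly in the characteristic; the paper's earlier $2$-Veronese lemma then converts the graded-center statement to the ungraded one.

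Your tame direction, by contrast, has a genuine gap at exactly the step you yourself flag. The assertion that finite dimensionality of $\Pi/Z_0^{+}\Pi$ is ``a condition on graded dimensions that is insensitive to the characteristic'' does not follow from characteristic-independence of the Hilbert series of $\Pi$ alone: what you actually need is characteristic-independence of the Hilbert series of the \emph{submodule} $Z_0^{+}\Pi$, and that is controlled by the ranks of the multiplication maps $z_i\cdot(-)\colon \Pi_{d-\lvert z_i\rvert}\to\Pi_d$, which can drop at the bad primes. Equivalently, module-finiteness of $\Pi_{\mathbb{Q}}$ over $Z_{0,\mathbb{Q}}$ does not yield module-finiteness of $\Pi_{\mathbb{Z}}$ over $Z_{0,\mathbb{Z}}$ without further input --- one needs either an a priori freeness statement for $\Pi_{\mathbb{Z}}$ over $Z_{0,\mathbb{Z}}$, or an explicit integral verification that the chosen central elements and module generators work over $\mathbb{Z}$. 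The latter is essentially the Erdmann--Solberg computation, which is precisely what the paper leans on.
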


\begin{proof}
    Assume that $A$ is not tame, meaning that it is of wild representation type by the tame-wild dichotomy \cite{Drozd}. The center of the associated preprojective algebra $\Pi_2 A$ is then isomorphic to $k$ by \cite[Theorem 8.4.1 (ii)]{CBEG07}, where we note that this result holds regardless of the characteristic of the field; see \cite[Theorem \mbox{10.1.1 (ii)}]{Schedler}. As $\Pi_2 A$ is infinite dimensional, it cannot be module finite over its center, which yields that $A$ is not $1$-representation tame.

    For the reverse direction, we assume that $A$ is tame and note that the proofs of the main result of \cite{Erdmann-Solberg-1} in the cases $\widetilde{\mathbb{A}}_n, \widetilde{\mathbb{D}}_n, \widetilde{\mathbb{E}}_6, \widetilde{\mathbb{E}}_7$ and $\widetilde{\mathbb{E}}_8$ together imply that $\Pi_2 A$ is module finite over its graded center which is also noetherian. In particular, one can check that $E(\Lambda)^{\op} \simeq \Pi_2 A$ for $\Lambda$ and $E(\Lambda)$ as in \cite{Erdmann-Solberg-1}, provided that their parameters $q_i$ are chosen appropriately. Hence, the algebra $A$ is $1$-representation tame by \cref{prop: mf over gr comm subalgebra iff mf over l-Veronese of that subalgebra}.
\end{proof}

We are now ready to relate the (\textbf{Fg})-condition to the theory of higher representation infinite algebras.

\begin{thm}\label{thm: Fg iff n-rep tame}
    Let $\Lambda$ be a graded symmetric finite dimensional $(n+1)$-$T$-Koszul algebra of highest degree $1$. Then $\Lambda$ satisfies the \emph{(\textbf{Fg})}-condition if and only if $B = \End_{\gr \Lambda}(T)$ is $n$-representation tame.
\end{thm}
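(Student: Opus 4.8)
The plan is to combine \cref{thm: characterisation for n-T} with an identification of the relevant algebras and then reduce the tameness criterion to the known machinery. By \cref{thm: characterisation for n-T}, since $\Lambda$ is $(n+1)$-$T$-Koszul, the (\textbf{Fg})-condition for $\Lambda$ is equivalent to $Z_{\gr}(\Lambda^!)$ being noetherian together with $\Lambda^!$ being module finite over $Z_{\gr}(\Lambda^!)$. So the whole proof rests on identifying $\Lambda^!$ with a preprojective-type algebra attached to $B$ and translating the two finiteness conditions. First I would unpack the $(n+1)$-$T$-Koszul dual: by definition $\Lambda^! = \oplus_{i \geq 0}\Ext^{(n+1)i}_{\gr \Lambda}(T, T\langle i\rangle)$, and since $\Lambda$ has highest degree $1$ and is graded symmetric, I would use the results of \cite{HS} to identify $B = \End_{\gr\Lambda}(T)$ as an $n$-representation infinite algebra and to show that $\Lambda^! \simeq \Pi_{n+1} B$, the $(n+1)$-preprojective algebra of $B$. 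Concretely, \cite[Lemma 2.13]{IO13} gives $\Pi_{n+1}B \simeq \oplus_{i\geq 0}\Hom_{\De^b(B)}(B, \nu_n^{-i}(B))$, so the key comparison is between the graded Ext-groups computing $\Lambda^!$ and the Hom-groups over $\De^b(B)$ computing the preprojective algebra; this identification, presumably already available from the graded-symmetric machinery in \cite{HS}, is what makes $B$ genuinely $n$-representation infinite and $\Lambda^!$ its preprojective algebra.

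Once $\Lambda^! \simeq \Pi_{n+1}B$ is in place, the remaining task is to reconcile the two notions of ``tameness''. The definition of $n$-representation tame asks that the \emph{ungraded} center $Z(\Pi_{n+1}B)$ be noetherian and $\Pi_{n+1}B$ be module finite over it, whereas \cref{thm: characterisation for n-T} delivers the condition in terms of the \emph{graded} center $Z_{\gr}(\Lambda^!)$. The bridge here is exactly \cref{prop: mf over gr comm subalgebra iff mf over l-Veronese of that subalgebra}: one passes between the center and the graded center by comparing a commutative (or graded commutative) subalgebra with its $2$-Veronese, and I would invoke the fact that on the even-degree part the graded center and the ordinary center agree, so that $Z_{\gr}(\Lambda^!)_{2*} = Z(\Lambda^!)_{2*}$ is the common $2$-Veronese. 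Applying \cref{prop: mf over gr comm subalgebra iff mf over l-Veronese of that subalgebra} twice — once to $Z = Z_{\gr}(\Lambda^!)$ and once to $Z = Z(\Lambda^!)$, both having the same $2$-Veronese subalgebra — then shows that $Z_{\gr}(\Lambda^!)$ is noetherian with $\Lambda^!$ module finite over it if and only if the same holds for $Z(\Lambda^!)$, which is precisely the $n$-representation tameness of $B$.

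I expect the main obstacle to be the first identification $\Lambda^! \simeq \Pi_{n+1}B$, and in particular verifying that $B$ is $n$-representation infinite so that $\Pi_{n+1}B$ even makes sense as an invariant with the right grading. This requires careful bookkeeping: matching the cohomological degree $(n+1)i$ and the internal/Adams degree $i$ appearing in \cref{def: n-T-Koszul dual} against the $\nu_n^{-i}$ powers of the Nakayama functor in the preprojective algebra, and checking that the graded symmetry of $\Lambda$ of highest degree $1$ is exactly what forces $\gldim B \leq n$ and the vanishing $\Hm^i(\nu_n^{-j}(B)) = 0$ for $i \neq 0$. This should be extractable from the characterization of graded symmetric higher Koszul algebras in \cite{HS} referenced in the introduction, but it is the technical heart of the argument. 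By contrast, the center-versus-graded-center comparison in the second paragraph is routine given \cref{prop: mf over gr comm subalgebra iff mf over l-Veronese of that subalgebra}, and the final equivalence then follows by simply chaining together \cref{thm: characterisation for n-T}, the identification $\Lambda^! \simeq \Pi_{n+1}B$, and the definition of $n$-representation tameness.
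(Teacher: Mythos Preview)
Your proposal is correct and follows essentially the same route as the paper: invoke \cref{thm: characterisation for n-T}, identify $\Lambda^! \simeq \Pi_{n+1}B$ using the graded-symmetric machinery of \cite{HS} (specifically \cite[Lemma~2.5(4), Theorem~5.2, Proposition~5.11]{HS}, which confirm both that $B$ is $n$-representation infinite and the preprojective identification), and then pass between $Z_{\gr}$ and $Z$ via their common $2$-Veronese using \cref{prop: mf over gr comm subalgebra iff mf over l-Veronese of that subalgebra}. The only thing you leave as an ``expected obstacle'' is already handled in \cite{HS}, so your plan goes through without further work.
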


\begin{proof}
As $B \simeq \End_{\stgrmodu \Lambda}(T)$ by \cite[Lemma 2.5 (4)]{HS}, we have that $B$ is $n$-representation infinite by \cite[Theorem 5.2]{HS}. By \cite[Proposition 5.11]{HS}, we have $\Lambda^! \simeq \Pi_{n+1} B$ as graded algebras since $\Lambda$ is graded symmetric of highest degree $1$. By \cref{prop: center of graded algebra is graded}, we moreover know that $Z(\Pi_{n+1} B)$ is a positively graded algebra. Observe next that the center and the graded center of a graded algebra have equal $2$-Veronese subalgebras, which in particular yields $Z(\Pi_{n+1} B)_{2*} = Z_{\gr}(\Pi_{n+1} B)_{2*}$. \cref{prop: mf over gr comm subalgebra iff mf over l-Veronese of that subalgebra} thus implies that $\Pi_{n+1} B$ is module finite over its graded center that is also noetherian if and only if $B$ is $n$-representation tame. The conclusion now follows by applying \cref{thm: characterisation for n-T}.
\end{proof}

We are now ready to prove \cref{thm: intro2} from the introduction. Using a characterization result from \cite{HS}, this is an immediate consequence of the result above.

\begin{cor}\label{cor: fg}
Let $\Lambda$ be a graded symmetric finite dimensional algebra of highest degree $1$ with $\Lambda_0$ an $n$-representation infinite algebra. Then $\Lambda$ satisfies the \emph{(\textbf{Fg})}-condition if and only if $\Lambda_0$ is $n$-representation tame. 
\end{cor}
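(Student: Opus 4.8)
The plan is to deduce the corollary directly from \cref{thm: Fg iff n-rep tame} by exhibiting the given $\Lambda$ as an $(n+1)$-$T$-Koszul algebra for a well-chosen tilting module $T$, and then identifying the endomorphism algebra $B = \End_{\gr \Lambda}(T)$ with $\Lambda_0$ itself. Once this is done, the equivalence in \cref{thm: Fg iff n-rep tame} between the (\textbf{Fg})-condition and $n$-representation tameness of $B$ translates verbatim into the desired equivalence in terms of $\Lambda_0$.

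First I would invoke the characterization result from \cite{HS}, which for a graded symmetric finite dimensional algebra $\Lambda$ of highest degree $1$ identifies the condition that $\Lambda_0$ be $n$-representation infinite with the condition that $\Lambda$ be $(n+1)$-Koszul with respect to $T = \Lambda_0$. This is the step that imports the real content: the point is that graded $(n+1)\mathbb{Z}$-orthogonality of $\Lambda_0$ as a $\Lambda$-module, i.e.\ the vanishing of $\Ext^{i}_{\gr \Lambda}(\Lambda_0, \Lambda_0 \langle j \rangle)$ for $i \neq (n+1)j$, is precisely the condition making $\Lambda_0$ an $n$-representation infinite algebra in this setting. One also checks the remaining hypotheses of the $(n+1)$-$T$-Koszul definition: the module $\Lambda_0$ is a (trivially) tilting $\Lambda_0$-module since it is a projective generator, and $\gldim \Lambda_0 \leq n < \infty$ because $\Lambda_0$ is $n$-representation infinite. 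Thus the hypotheses of \cref{thm: Fg iff n-rep tame} are met with $T = \Lambda_0$.

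Next I would identify $B$ with $\Lambda_0$. Viewing $T = \Lambda_0$ as the graded $\Lambda$-module $\Lambda / \Lambda_{\geq 1}$ concentrated in degree $0$, every degree-zero graded $\Lambda$-endomorphism of $T$ is simply a $\Lambda_0$-endomorphism of the right regular module $\Lambda_0$, since $\Lambda_{\geq 1}$ acts as zero. As $\End_{\Lambda_0}(\Lambda_0) \simeq \Lambda_0$ as algebras, we obtain $B = \End_{\gr \Lambda}(T) \simeq \Lambda_0$; in particular $B$ is $n$-representation tame if and only if $\Lambda_0$ is. Combining this with \cref{thm: Fg iff n-rep tame} then gives that $\Lambda$ satisfies the (\textbf{Fg})-condition if and only if $B$, and hence $\Lambda_0$, is $n$-representation tame.

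The main obstacle is entirely packaged into the characterization result from \cite{HS}: the whole argument hinges on knowing that, among graded symmetric algebras of highest degree $1$, those with $n$-representation infinite degree-zero part are exactly the ones that are $(n+1)$-Koszul with respect to $\Lambda_0$. Once this correspondence is granted, the remaining steps---verifying the tilting and global-dimension hypotheses and computing $B \simeq \Lambda_0$---are routine, so the corollary is indeed an immediate consequence of \cref{thm: Fg iff n-rep tame}.
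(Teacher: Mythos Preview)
Your proposal is correct and follows essentially the same route as the paper: invoke the characterization from \cite{HS} to see that $\Lambda$ is $(n+1)$-Koszul with respect to $T=\Lambda_0$, identify $\End_{\gr\Lambda}(\Lambda_0)\simeq\Lambda_0$, and then apply \cref{thm: Fg iff n-rep tame}. Your write-up simply spells out in more detail the routine verifications (tilting, finite global dimension, the endomorphism computation) that the paper leaves implicit.
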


\begin{proof}
By \cite[Corollary 5.7]{HS}, the assumptions imply that $\Lambda$ is $(n+1)$-Koszul with respect to $T = \Lambda_{0}$. Using that $\End_{\gr \Lambda}(\Lambda_0) \simeq \Lambda_0$, the conclusion now follows by applying \cref{thm: Fg iff n-rep tame}.
\end{proof}

\section{Applications and examples}\label{subsec: applications and examples}
In this section we give an overview of some applications and examples demonstrating how our results significantly extend the classes of algebras for which the answer to the motivating question from the introduction is known. We note that in the examples we present, we are not aware of any other methods for verifying the (\textbf{Fg})-condition except those introduced in this paper.

Recall first that the \emph{trivial extension} of a finite dimensional algebra $A$ is given by \mbox{$\Delta A \coloneq A \oplus DA$}, with multiplication $$(a, f) \cdot (b, g) = (ab, ag + fb)$$ for \mbox{$a,b \in A$} and \mbox{$f,g \in DA$}. The trivial extension $\Delta A$ is a graded symmetric algebra, where $A$ is considered to be in degree $0$ and $DA$ to be in degree $1$.

Combining \cref{cor: fg} with \cref{prop: tame iff 1-tame} yields the following as an immediate consequence.

\begin{cor}\label{cor: known except special cases}
Let $\Lambda$ be a graded symmetric finite dimensional algebra of highest degree $1$ with $\Lambda_0$ a representation infinite hereditary algebra. Then $\Lambda$ satisfies the \emph{(\textbf{Fg})}-condition if and only if $\Lambda_0$ is tame. 
\end{cor}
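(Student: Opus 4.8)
The plan is to show that Corollary~\ref{cor: known except special cases} follows immediately by combining the two prior results, with no genuinely new content required. First I would observe that the hypotheses match those of \cref{cor: fg} exactly in the case $n=1$: we are given a graded symmetric finite dimensional algebra $\Lambda$ of highest degree $1$ whose degree-zero part $\Lambda_0$ is a representation infinite hereditary algebra. A representation infinite hereditary algebra has global dimension at most $1$, and in the literature on $n$-representation infinite algebras such an algebra is precisely a $1$-representation infinite algebra (the condition $\gldim A \le 1$ together with the vanishing of $\Hm^i(\nu_1^{-j}(A))$ for $i \neq 0$ is automatic for hereditary algebras of infinite representation type). Thus $\Lambda_0$ is $1$-representation infinite in the sense used in \cref{cor: fg}.

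With this identification in hand, I would apply \cref{cor: fg} with $n=1$. This tells us that $\Lambda$ satisfies the (\textbf{Fg})-condition if and only if $\Lambda_0$ is $1$-representation tame. The only remaining step is to translate $1$-representation tameness back into the classical notion of tameness, and this is exactly the content of \cref{prop: tame iff 1-tame}, which asserts that for a representation infinite hereditary algebra, being tame is equivalent to being $1$-representation tame. Substituting this equivalence into the conclusion of \cref{cor: fg} yields precisely the statement that $\Lambda$ satisfies the (\textbf{Fg})-condition if and only if $\Lambda_0$ is tame.

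I do not anticipate any real obstacle here, since the corollary is stated in the excerpt as an \emph{immediate consequence} of the two cited results. The only point requiring a small amount of care is confirming that the hypothesis ``$\Lambda_0$ a representation infinite hereditary algebra'' correctly instantiates the hypothesis ``$\Lambda_0$ an $n$-representation infinite algebra'' of \cref{cor: fg} at $n=1$ — that is, verifying the coincidence between the classical and the higher notions in the base case. This coincidence is already explicitly invoked in the surrounding text, where it is noted that a $1$-representation infinite algebra is the same as a representation infinite hereditary algebra, so the proof reduces to chaining the two equivalences. A clean write-up would simply state: by \cref{cor: fg} applied with $n=1$, the (\textbf{Fg})-condition holds for $\Lambda$ if and only if $\Lambda_0$ is $1$-representation tame, and by \cref{prop: tame iff 1-tame} the latter is equivalent to $\Lambda_0$ being tame.
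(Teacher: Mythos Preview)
Your proposal is correct and follows exactly the same approach as the paper: the paper states that the corollary follows immediately by combining \cref{cor: fg} (with $n=1$) and \cref{prop: tame iff 1-tame}, which is precisely what you do.
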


The corollary above entails that the trivial extension of a representation infinite hereditary algebra $A = kQ$ satisfies the (\textbf{Fg})-condition if and only if $A$ is tame. This was known to the experts in the case where the APR-tilting class of $A$ contains a hereditary algebra whose quiver is a bipartite orientation of $Q$. In particular, it was known that trivial extensions of tame representation infinite hereditary algebras satisfy the (\textbf{Fg})-condition, except in the case of certain orientations \mbox{of $\widetilde{A}_n$}. The argument in question uses the invariance of the (\textbf{Fg})-condition under derived equivalence \cite{Kulshammer-Psaroudakis-Skartsaeterhagen}, that derived equivalence of a pair of algebras implies that of their trivial extensions \cite{Rickard-stable-paper}, and that the main result from \cite{Erdmann-Solberg-1} can be applied to trivial extensions of bipartite hereditary algebras.

Note that our approach gives a unified proof for all cases, including those in which the hereditary algebra is not derived equivalent to one which is bipartite. Such a case is illustrated in the example below.
 
\begin{example}\label{ex: case not known}
Let $A = kQ$ be the path algebra of the quiver
\[
\begin{tikzcd}
& 2 \drar["a_2"] &\\
1 \urar["a_1"] \arrow[rr, "b"] & & 3
\end{tikzcd}
\]
Note that even though $A$ is a tame representation infinite hereditary algebra, the arguments sketched above do not apply as no orientation of the quiver $Q$ is bipartite. The trivial extension $\Delta A$ has quiver 
\[
\begin{tikzcd}
& 2 \drar["a_2"]  &\\
1 \urar["a_1"] \arrow[rr, "b"] & & 3 \arrow[ll, bend left, blue, "r_1" above]
\arrow[ll, bend left, blue, shift left = 1.15ex, "r_2"]
\end{tikzcd}
\]
and relations given by $br_1 - a_1a_2r_2, r_1b - r_2a_1a_2, a_2r_1, r_1a_1, br_2$ and $r_2b$. Since it has non-quadratic relations, the algebra is not Koszul in the classical sense by \mbox{\cite[Proposition 1.2.3]{Beilinson-Ginzburg-Soergel}}, and thus the results in \cite{Erdmann-Solberg-1} cannot be applied to $\Delta A$. However, \cref{cor: known except special cases} allows us to deduce immediately that $\Delta A$ indeed satisfies the (\textbf{Fg})-condition.
\end{example}

We now elaborate a bit on one interesting and useful feature of working with our theory that is exhibited in the preceding example. Namely, even if the higher Koszul algebra $\Lambda$ that we consider is not itself classically Koszul, it may still be the case that the higher Koszul dual is. This is particularly useful in the setup where $\Lambda = \Delta A$ for an $n$-representation infinite algebra $A$, since then the \mbox{$(n+1)$-$A$-Koszul} dual is the $(n+1)$-preprojective of $A$ \cite[Proposition 5.11]{HS}. As $\Pi_{n+1} A$ is known to be classically Koszul whenever $A$ is by \cite[Theorem B (b)]{GI20}, one often has access to useful formulas for computing its quiver with relations; see \cite[\mbox{Theorem C}]{Thi20} and \cite[Theorem A]{GI20}. This makes it easier to compute the (graded) center of $\Pi_{n+1}A$ and to check if $\Pi_{n+1}A$ is module finite over it, which again allows us to determine whether or not $\Delta A$ satisfies the (\textbf{Fg})-condition by \cref{cor: fg}. Note that this approach is particularly powerful when the $(n+1)$-$A$-Koszul dual has a quadratic Gr\"obner basis, as one can then expect the computations involved to be particularly tractable. Indeed, having a quadratic Gr\"obner basis is a large part of what enables the rather straightforward computations in \cite{Erdmann-Solberg-1} that we refer to in the proof of \cref{prop: tame iff 1-tame} and which are necessary for \cref{cor: known except special cases}. 

\cref{ex: tensor product} illustrates the approach sketched above for an important class of examples.

\begin{example}\label{ex: tensor product}
Let $Q$ be a non-Dynkin quiver. As $kQ$ is $1$-representation infinite, the algebra $A \coloneq kQ \otimes_k kQ$ is $2$-representation infinite by \cite[Theorem 2.10]{HIO14}. Moreover, note that $A$ is Koszul since $kQ$ is Koszul and tensor products of Koszul algebras are again Koszul.

If $Q$ is not bipartite, then the trivial extension $\Delta A$ cannot be Koszul in the classical sense as it will have non-quadratic relations \cite[Proposition 1.2.3]{Beilinson-Ginzburg-Soergel}. It is thus reasonable to expect that checking the (\textbf{Fg})-condition for $\Delta A$ directly could be difficult. Moreover, if $kQ$ is not derived equivalent to a bipartite hereditary algebra, we are not aware of simplifications involving derived equivalences, as we do not know of a candidate Koszul algebra derived equivalent to $\Delta A$.

However, regardless of whether $Q$ is bipartite, we always know that the quiver with relations of $(\Delta A)^! \simeq \Pi_3 A$ can be given by the description in \cite[Theorem C]{Thi20} or \cite[Theorem A]{GI20} as $A$ is $2$-representation infinite and Koszul. This makes it easier to compute the (graded) center of $\Pi_3 A$ and check whether $A$ is $2$-representation tame and thus, equivalently, whether $\Delta A$ satisfies the (\textbf{Fg})-condition. Following this approach, it is for instance relatively straightforward to check that $A$ is \mbox{$2$-representation} tame in the case where $Q$ is of type $\widetilde{\mathbb{A}}_n$. This can e.g.\ be seen by similar arguments as those used in \cite{Anundsen-Sandoey}.
\end{example}

We now consider another class of examples that are not necessarily Koszul in the classical sense. A \emph{dimer algebra} is an infinite dimensional algebra derived from a dimer model on the torus; see e.g.\ \cite{Broomhead'12, Nakajima}. Namely, it is the Jacobian algebra $J(Q,W)$ of a quiver $Q$ with potential $W$ obtained from a bipartite graph that tiles the torus. A dimer model is said to be \textit{consistent} if there is a positive grading defined on the arrows of the associated quiver satisfying certain technical conditions; see e.g.\ \cite[Definition 2.2]{Nakajima}. Note that there are many equivalent notions of consistency appearing in the literature; see \cite{Boc12,IU11}. When a dimer model is consistent, it must have a \emph{perfect matching} by e.g.\ \cite[Proposition 8.1]{IU15}, meaning that there exists a subset of the edges of the given bipartite graph on the torus such that every vertex of the graph lies on exactly one edge of that subset. Moreover, such a perfect matching induces a positive grading on the dimer algebra; see the discussion after \cite[Definition 1.1]{Nakajima}.

It should be noted that $3$-preprojective algebras of so-called $2$-representation infinite algebras of type $\widetilde{\mathbb{A}}_n$, as introduced in \cite{HIO14}, are examples of dimer algebras coming from consistent dimer models by e.g.\ \cite[Section 5]{HIO14} or \cite{Dramburg-Gasanova'24}. These higher preprojective algebras are Koszul in the classical sense, since they arise from skew group algebras of the polynomial ring in three variables. However, even though the class of dimer algebras associated to consistent dimer models consequently contains certain algebras that are classically Koszul, such algebras are not Koszul in general. Nevertheless, our theory allows us to easily deduce that if $\Gamma$ is a dimer algebra with a consistent dimer model and $A \coloneq \Gamma_0$ is finite dimensional, then $\Delta A$ satisfies the (\textbf{Fg})-condition.

\begin{thm}\label{thm: Fg and consistent dimer algebras}
Let $\Gamma$ be a dimer algebra associated to a consistent dimer model, and assume that the dimer model has a perfect matching inducing a grading such that $A \coloneq \Gamma_0$ is finite dimensional. Then $\Delta A$ satisfies the \emph{(\textbf{Fg})}-condition. 
\end{thm}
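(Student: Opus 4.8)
The plan is to recognize this theorem as a direct application of \cref{cor: fg} once we identify the relevant $n$-representation infinite structure. The key observation is that a dimer algebra $\Gamma$ associated to a consistent dimer model is a $3$-preprojective algebra: by work of Nakajima \cite{Nakajima}, under the consistency hypothesis together with the existence of a perfect matching giving a grading with $A = \Gamma_0$ finite dimensional, the degree-zero part $A$ is a $2$-representation infinite algebra and $\Gamma \simeq \Pi_3 A$ as graded algebras. This is precisely the setting in which our theory applies with $n = 2$.

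First I would invoke the cited result of Nakajima to establish that $A \coloneq \Gamma_0$ is $2$-representation infinite and that $\Gamma \simeq \Pi_3 A$. Next, I would form the trivial extension $\Delta A = A \oplus DA$, which is graded symmetric of highest degree $1$ with $(\Delta A)_0 = A$ a $2$-representation infinite algebra, placing us squarely in the hypotheses of \cref{cor: fg} with $n = 2$. By that corollary, $\Delta A$ satisfies the (\textbf{Fg})-condition if and only if $A$ is $2$-representation tame, i.e.\ if and only if $\Pi_3 A \simeq \Gamma$ is module finite over a noetherian center.

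The crux is therefore to verify that $A$ is $2$-representation tame, equivalently that the dimer algebra $\Gamma$ is a noetherian algebra that is module finite over its center. This is exactly the kind of finiteness statement that is well-understood for dimer algebras: consistent dimer algebras are known to be noetherian and to be finitely generated modules over their centers, the latter being the relevant affine toric Gorenstein singularity; this is part of what makes them non-commutative crepant resolutions as recalled in the introduction via \cite{vandenBergh'23}. I would cite the appropriate statement from \cite{Nakajima} (or the consistency literature \cite{Boc12,IU11,IU15,Broomhead'12}) giving that $\Gamma$ is module finite over its noetherian center, and then translate this into $2$-representation tameness of $A$ using \cref{def: n-T-Koszul dual} and the identification $\Gamma \simeq \Pi_3 A$, with \cref{prop: center of graded algebra is graded} ensuring the center is graded.

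I expect the main obstacle to be purely one of correctly matching conventions between Nakajima's formulation and ours: ensuring that the grading induced by the perfect matching is the one making $\Gamma_0 = A$ the degree-zero part in the $n$-representation infinite sense, and that Nakajima's consistency-plus-finiteness output is literally the statement that $\Pi_3 A$ is module finite over its noetherian center rather than some superficially different condition. Once these identifications are in place, the conclusion follows immediately from \cref{cor: fg}, so the mathematical content is entirely loaded into the citation of Nakajima's work and the dictionary between dimer algebras and $3$-preprojective algebras.
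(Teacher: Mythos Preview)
Your proposal is correct and follows essentially the same route as the paper: invoke Nakajima to see that $A=\Gamma_0$ is $2$-representation infinite with $\Gamma\simeq\Pi_3 A$, use that a consistent dimer algebra is a non-commutative crepant resolution and hence module finite over its noetherian center to conclude $A$ is $2$-representation tame, and then apply \cref{cor: fg}. The paper's proof is just a terser version of exactly this argument, citing \cite[Proposition~3.5]{Nakajima} for the $2$-representation infinite part and the end of \cite[Section~3]{Nakajima} for tameness via the NCCR property.
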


\begin{proof}
Given our assumptions, \cite[Proposition 3.5]{Nakajima} yields that $A$ is $2$-representation infinite. Moreover, as pointed out at the end of \cite[Section 3]{Nakajima}, we know that $A$ is in fact \mbox{$2$-representation} tame since $\Gamma$ is a non-commutative crepant resolution of its center. The result now follows by applying \cref{cor: fg}.
\end{proof}

We illustrate the theorem above with an example from \cite{Amiot-Iyama-Reiten}. 

\begin{example}
Consider the quiver $Q$ given by
\[
\begin{tikzcd}
1 \rar[shift left, "x_1"] \rar[shift right, "y_1" below] & 2 \dar[shift left, "x_2"] \dar[shift right, "y_2" left] \\
4  \uar[shift left, "x_4"] \uar[shift right, "y_4" right] & 3 \lar[shift left, "x_3"] \lar[shift right, "y_3" above]
\end{tikzcd}
\]
with potential $W = x_1 x_2 x_3 x_4 + y_1 y_2 y_3 y_4 - x_1 y_2 x_3 y_4 - y_1 x_2 y_3 x_4$. The relations obtained as $\partial_\alpha W$ for $\alpha \in Q_1$ are cubic, implying that  
\[
\Gamma \coloneq kQ/\langle \partial_\alpha W \, \lvert \, \alpha \in Q_1\rangle
\]
cannot be Koszul in the classical sense \cite[Proposition 1.2.3]{Beilinson-Ginzburg-Soergel}. By \cite[Examples 6.2]{Amiot-Iyama-Reiten}, we have that $\Gamma$ is the dimer algebra of a consistent dimer model with a perfect matching inducing a grading such that $A \coloneq \Gamma_0$ is finite dimensional. Moreover, one obtains such a grading by putting e.g.\ $\{x_4, y_4\}$ in degree $1$, in which case $A$ can be given by the quiver
\[
\begin{tikzcd}
1 \rar[shift left, "x_1"] \rar[shift right, "y_1" below] & 2 \rar[shift left, "x_2"] \rar[shift right, "y_2" below] & 3 \rar[shift left, "x_3"] \rar[shift right, "y_3" below] & 4 
\end{tikzcd}
\]
with relations $x_1 x_2 x_3 - y_1 x_2 y_3$ and $y_1 y_2 y_3 - x_1 y_2 x_3$. 
Using e.g.\ \cite{Schroer'99} or \cite{Fernandez-Platzeck'02,Fernandez-et-al'22}, one could compute the quiver and relations of $\Delta A$ explicitly, but for our purposes it suffices to observe that also $\Delta A$ cannot be Koszul in the classical sense since $A$ has cubic relations. We note that it seems quite difficult to compute both the $\Ext$-algebra of the simple modules and the Hochschild cohomology of $\Delta A$ and use this to verify the (\textbf{Fg})-condition directly. Nevertheless, we know from \cref{thm: Fg and consistent dimer algebras} that $\Delta A$ must satisfy the (\textbf{Fg})-condition. 
\end{example}

\begin{remark}\label{rem: n-RF case}
By \cite{Iyama-et-al}, it is known that $n$-representation finite algebras have trivial extensions that are twisted periodic, meaning that the simple modules have periodic projective resolutions, or equivalently that the algebra considered as a bimodule is isomorphic to one of its syzygies twisted by an automorphism on one side. A twisted periodic algebra is called periodic if the aforementioned automorphism can be chosen to be the identity. The periodicity conjecture of Erdmann and Skowronsk\'i claims that all twisted periodic algebras are in fact periodic \cite{Erdmann-Skowronski'15}. Using the ideas in \cite{Green-et-al-2}, one can check that twisted periodic algebras satisfy the (\textbf{Fg})-condition if and only if they are periodic. The periodicity conjecture thus suggests that trivial extensions of $n$-representation finite algebras is a source of algebras that satisfy the (\textbf{Fg})-condition. 
\end{remark}

\begin{acknowledgements}
The second author is grateful to have been supported by the Norwegian Research Council project 301375, ``Applications of reduction techniques and computations in representation theory''.

The authors profited from use of the software QPA \cite{QPA} to compute examples which motivated parts of the paper. The authors would moreover like to thank \O yvind Solberg and Steffen Oppermann for helpful discussions.
\end{acknowledgements}

\bibliographystyle{plain}
\bibliography{bib.bib} 

\end{document}